\documentclass{article}
\usepackage[T1]{fontenc}
\usepackage[english]{babel}
\usepackage{graphicx}
\usepackage{amssymb, amsmath, amsfonts}
\usepackage{dsfont}
\usepackage{vmargin}
\usepackage{color}
\usepackage{amsthm}       

\everymath{\displaystyle}

\usepackage{hyperref}
\hypersetup{
	colorlinks=true,
	breaklinks=true,
	urlcolor=blue,
	linkcolor=red,
	bookmarksopen=true,
	pdftitle = Asymptotic properties of stochastic Cahn-Hilliard equation with singular nonlinearity and degenerate noise,
	pdfauthor = Ludovic Gouden\`ege and Luigi Manca,
	pdfsubject = Asymptotic properties of stochastic Cahn-Hilliard equation with singular nonlinearity and degenerate noise,
	pdfkeywords = {asymptotic, Cahn-Hilliard, stochastic partial differential equations, singular nonlinearity, degenerate noise}
}

\numberwithin{equation}{section} 
\renewcommand{\thefootnote}{\*}

\newcommand{\R}{\mathbb{R}} 
\newcommand{\N}{\mathbb{N}} 
\newcommand{\E}{\mathbb{E}} 
\newcommand{\sL}{\mathrm{L}}
\newcommand{\sH}{\mathrm{H}}
\newcommand{\dd}{\mathrm{d}} 
\newcommand{\dds}{\mathrm{d}s}
\newcommand{\ddt}{\mathrm{d}t}
\newcommand{\ddtheta}{\mathrm{d\theta}}
\newcommand{\ddx}{\mathrm{d}x}

\def\P{\mathbb P}
\def\Tr{\mathrm{Tr}}

\def\ds{\displaystyle}

\newcommand{\BD}{\begin{displaymath}}
\newcommand{\ED}{\end{displaymath}}
\newcommand{\BEA}{\begin{eqnarray}}
\newcommand{\EEA}{\end{eqnarray}}
\newcommand{\BEAS}{\begin{eqnarray*}}
\newcommand{\EEAS}{\end{eqnarray*}}
\newcommand{\BE}{\begin{equation}}
\newcommand{\EE}{\end{equation}}
\newcommand{\BES}{\begin{equation*}}
\newcommand{\EES}{\end{equation*}}


\newcommand{\Proof}{\noindent {\bf{\underline{Proof} : }}}
\newcommand{\EProof}{\begin{flushright}$\Box$\end{flushright}}


\newtheorem{Def}{Definition}[section]

\newtheorem{Le}{Lemma}[section]
\newtheorem{Th}{Theorem}[section]
\newtheorem{Prop}{Proposition}[section]

\title{Asymptotic properties of stochastic Cahn-Hilliard equation with singular nonlinearity and degenerate noise}
\author{Ludovic Gouden\`ege\footnotemark[1], Luigi Manca\footnotemark[2]}
\date{}

\begin{document}

\maketitle
\renewcommand{\thefootnote}{\*}
\footnotetext[0]{\!\!\!\!\!\!\!\!\!\!\!AMS 2000 subject classifications. 60H15, 60H07, 37L40\\
{\em Key words and phrases} : ergodicity, Cahn-Hilliard, stochastic partial differential equations, singular nonlinearity, degenerate noise.\\}
\renewcommand{\thefootnote}{\fnsymbol{footnote}}
\footnotetext[1]{CNRS, LAMA, 77454 Marne-la-Vall\'ee, France, goudenege@math.cnrs.fr}
\footnotetext[2]{LAMA, 77454 Marne-la-Vall\'ee, France, luigi.manca@u-pem.fr}

\begin{abstract}
We consider a stochastic partial differential equation with a logarithmic nonlinearity with singularities at $1$ and $-1$
and a constraint of conservation of the space average. 
The equation, driven by a trace-class space-time noise, contains a bi-Laplacian in the drift. 
We obtain existence of solution for equation with polynomial approximation of the nonlinearity.
Tightness of this approximated sequence of solutions is proved, leading to a limit transition semi-group.
We study the asymptotic properties of this semi-group, showing the existence and uniqueness of invariant measure,
asymptotic strong Feller property and topological irreducibility.

\end{abstract}

\section*{Introduction and main results}
The Cahn-Hilliard-Cook equation is a model to describe phase separation in a binary alloy (see \cite{C}, \cite{CH1} and \cite{CH2}) in the presence of thermal fluctuations (see \cite{COOK} and \cite{LANGER}). It takes the form
\BE
\label{Eq:0.1}
\left\{\begin{array}{ll}
\partial_{t} u = - \frac{1}{2}\Delta\left(\Delta u - \psi(u)\right)+\dot{\xi},&\text{ on } \Omega\subset \R^n,\\
\\
\nabla u \cdot \nu = 0 = \nabla (\Delta u-\psi(u)) \cdot \nu, &\text{ on } \partial\Omega,\\ 
\end{array}\right.
\EE
where $t$ denotes the time variable and $\Delta$ is the Laplace operator. Also $u\in [-1,1]$ represents the ratio between the two species and the noise term $\dot{\xi}$ accounts for the thermal fluctuations. The nonlinear term $\psi$ has the double-logarithmic form
\BE\label{Eq:0.2}
\psi : u \mapsto \frac{\theta}{2} \ln\left(\frac{1+u}{1-u}\right) - \theta_{c} u,
\EE
where $\theta$ and $\theta_{c}$ are temperatures with $\theta < \theta_{c}$.

The unknown $u$ represents the concentration of one specie with respect to the second one. In the deterministic case, the equation is obtained as a gradient in $\sH^{-1}(\Omega)$ of the free energy
$$
{\mathcal E}(u)=\int_{\Omega} \left(\frac12|\nabla u|^2 + \Psi(u)\right) \ddx 
$$
where $\Psi$ is an antiderivative of $\psi$. Since the gradient is taken in $\sH^{-1}(\Omega)$, a fourth order equation is obtained. A key point is that the average $\ds \int_\Omega u\  \ddx$ is conserved.

The deterministic equation where $\psi$ is replaced by a polynomial function has first been studied (see \cite{CH1}, \cite{LANGER} and \cite{MR763473}). The drawback of 	a polynomial nonlinearity is that the solution is not constrained to remain in the physically relevant interval $[-1,1]$. Singular nonlinear terms, such as the logarithmic nonlinearity considered here, remedy this problem. 
Such non smooth functions $\psi$ have also been considered (see \cite{MR1123143} and \cite{MR1327930}).

Phase separation has been analyzed thanks to this model: see for example the survey \cite{MR1657208}, and the references therein, or others recent results on spinodal decomposition and nucleation in \cite{MR1232163, MR2342011, MR1214868, MR1637817, MR1753703, MR1712442, MR1763320, MR2048517}. 

The noise term accounts for thermal fluctuations and is a commonly accepted model now. 
This stochastic equation has also been first studied in the case of a polynomial nonlinearity (see \cite{BLMAWA1, BLMAWA2, MR1867082, MR1897915, MR1359472, MR1111627}). Again, with a polynomial nonlinear term the solutions do not remain in $[-1,1]$ in general. It is even worse in the presence of noise since any solution leaves this interval immediately with positive probability.

Since the Cahn-Hilliard equation is a gradient flow in $\sH^{-1}(\Omega)$, it is natural to consider a noise which is a cylindrical Wiener process in this space ({\it i.e.} the spatial derivative of a space-time white noise). Another justification of this choice for the noise is that the stochastic equation has still a gradient structure and an explicit invariant measure is known.

Unfortunately, if the dimension $n$ is larger than $1$, it is not difficult to see that with such a noise, even for the linear equation, the solutions have negative spatial regularity and it does not seem possible to treat the nonlinearity. 
Finally, it is not expected that the singularity of logarithmic nonlinear term is strong enough to prevent 
the solution from exiting the interval $[-1,1]$. It has been rigorously proved in \cite{MR2568285} that a nonlinear term of the form $\ln u$ is not strong enough to ensure that solutions remain positive and a reflection measure has to be added. 


The stochastic heat equation with reflection, {\it i.e.} when the fourth order operator is replaced by the Laplace operator, is a model for the evolution of random interfaces near a hard wall. 
It has been extensively studied in the literature (see \cite{MR2257651}, \cite{MR1835843}, \cite{MR1872740},  \cite{MR1172940} \cite{MR1921014}, \cite{MR1959795} and \cite{MR2128236}). 
The two-walls case has been treated  in \cite{MR2277518}.
Essential tools in these articles are the comparison principle and the fact that the underlying Dirichlet form is symmetric so that the invariant measure is known explicitly. 

However, no comparison principle holds for fourth order equations and new techniques have to be developed. 
Equation \eqref{Eq:0.1} has been studied in \cite{MR2349572} with a single reflection and when no nonlinear term is taken into account. The reflection is introduced to enforce positivity of the solution. It has been shown in \cite{MR2421180} that a conservative random interface model close to one hard wall  gives rise to this fourth-order stochastic partial differential equation with one reflection. Various techniques have been introduced to  overcome the lack of comparison principle. 
Moreover, as in the second order case, an integration by part formula for the invariant measure has been 
derived. Then, in \cite{MR2568285}, a singular nonlinearity of the form $u^{-\alpha}$ or $\ln u$ has been considered. Existence and uniqueness of solutions have been obtained and using the integration by parts formula as in \cite{MR1959795}, it has been proved that the reflection measure vanishes if and only if $\alpha \ge 3$. In particular, as mentioned above, for a logarithmic nonlinearity, the reflection is active. For a double singular nonlinearity of the form \eqref{Eq:0.2}, thanks to a delicate a priori estimate on the $\sL^1$ norm of the nonlinearity, the authors have obtained uniqueness and existence.

In this article, we do not consider that the noise is a cylindrical Wiener process in $\sH^{-1}$.
In this case the equation has not the desired gradient structure and the technics developed in \cite{DEBUSSCHE-GOUDENEGE}, \cite{MR2349572} and \cite{MR2568285} are not any more valid.
However the choice of smooth-enough space-time noise permits to work with an It\^o's formula.
Following the approach  of \cite{DEBUSSCHE-GOUDENEGE}, we shall consider the problem with a polynomial approximation of the nonlinearity $\psi$.
We obtain {\em a priori} estimates on the approximated solutions which are sufficient to prove tightness.

Unfortunately, due to the degenerate noise, we are not able to show that the approximated solutions are tight in $\mathcal{C}([0,T]\times[0,1];[-1,1])$ and, as consequence,
we are not able to characterise the limit process $X(t,x)$ as a solution of some stochastic partial differential equation.

Despite of this, we discuss about the asymptotic properties of the transition semi-group $P_t$ associated to this process.
The main result of the paper consist in Theorems \eqref{prop.ex.inv.meas} and \eqref{thm.erg}, where we show that $P_t$ is ergodic and admits a unique invariant measure. 
The ergodicity result is  obtained by assuming that the noise  acts on the space spanned by the first eigenvectors of the laplacian  (see \eqref{assumptionB}, \eqref{eq.N}) ;
this property is sometimes called {\em essentially elliptic} (see \cite{MR2259251}, \cite{MR2786645}).

The paper is organised as follows : in the next section we introduce notations and the approximated problem ; in section \ref{S:2} we obtain some a priori estimate and the convergence of 
the approximated process to the limit process as well as the associated transition semigroup ; in section \ref{S:3} we discuss about the ergodicity properties of the transition semigroup.


\section{Preliminaries}\label{S:1}

We denote by $\langle\cdot,\cdot\rangle$ the scalar product in $\sL^2(0,1)$;  $A$ is the realization in $\sL^2(0,1)$ of the Laplace operator with Neumann boundary condition, i.e.
\BD
D(A) = \text{ Domain of }A  = \{ h \in W^{2,2}(0,1) : h'(0) = h'(1) = 0\}
\ED
where $W^{2,2}(0,1)$ is the classical Sobolev space. 
Remark that $A$ is self-adjoint on $\sL^2(0,1)$ and we have a complete orthonormal system of eigenvectors $(e_{i})_{i\in\N}$ in $\sL^2(0,1)$ for the eigenvalues $(\lambda_{i})_{i\in\N}$. We denote by $\bar{h}$ the mean of $h \in \sL^2(0,1)$
\BD
\bar{h} = \int_{0}^1h(\theta)\ddtheta.
\ED
We remark that $A$ is invertible on the space of functions with $0$ average. In general, we define
$(-A)^{-1}h=(-A)^{-1}(h-\bar h)+\bar h$.

For $\gamma \in\R$, we define $(-A)^{\gamma}$ by classical interpolation. We set 
$V_{\gamma}:=D((-A)^{\gamma/2})$. 
It is endowed with the classical seminorm and norm
\BD
|h|_{\gamma} =  \left(\sum_{i=1}^{+\infty} (-\lambda_{i})^{\gamma} h_{i}^2\right)^{1/2},\; 
\|h\|_{\gamma} =  \left(|h|_{\gamma}^2 + \bar{h}^2\right)^{1/2},\mbox{ for } h=\sum_{i\in\N} h_i e_i.
\ED
$|\cdot|_\gamma$ is associated to the scalar product $( \cdot,\cdot)_{\gamma}$.
To lighten notations, we set $(\cdot,\cdot) := (\cdot,\cdot)_{-1}$  and $\sH:=V_{-1}$. The average can be defined in any $V_\gamma$ by $\bar h= (h,e_0)$.
It plays an important role and we often work with functions with a fixed average $c\in \R$. We define $\sH_c=\{h\in \sH : \bar{h}=c\}$ for all $c\in \R$. 

We denote by $\mathcal{B}_b(\sH_{c})$ the space of all Borel bounded functions 

The covariance operator of the noise is an operator $B$ 
 such that 
\BES
\Tr_{\gamma} := \Tr\left[\sqrt{B}(-A)^{\gamma}\sqrt{B}^{*}\right]
\EES
is finite for some $\gamma \in\R$ which will be precise after. Moreover we assume $\sqrt{B} e_{0} = 0$ in order to ensure the conservation of average.

For $\lambda \in \R$, we define:
\BD                                                                                                                                                                                                                                                                                                      
f(u):=\left\{\begin{array}{lr}
+\infty,& \text{ for all } u \leq -1,\\
\\
\ln \left(\frac{1-u}{1+u}\right) + \lambda u,& \text{ for all } u \in (-1,1),\\
\\
-\infty,& \text{ for all } u \geq 1,
\end{array}\right.                                                                                                                                                                                                                                                                                                      
\ED
and the following antiderivative $F$ of $-f$:
\BD
F(u)= (1+u) \ln(1+u)+(1-u) \ln(1-u)-\frac{\lambda}2 u^2,\text{ for all }u \in (-1,1).
\ED
Let $X(\cdot,\cdot)$ stand for a function on $[0,T]\times[0,1]$. With these notations, we rewrite \eqref{Eq:0.1} in the abstract form:
\BE\label{Eq:1.1}
\left\{\begin{array}{l}
\dd X=-\frac{1}{2}A\left(AX+f(X)+\eta_{-}-\eta_{+}\right)\ddt + \sqrt{B} \ \dd W,\\
\\
\langle (1+X) ,\eta_{-}\rangle_{O_{T}} = \langle (1-X) ,\eta_{+}\rangle_{O_{T}} =0,\\
\\
X(0,\cdot)=x \text{ for } x \in \sH,
\end{array}\right.
\EE
where $W$ is a cylindrical Wiener process on $\sL^2(0,1)$ and
\BD
\big\langle v,\zeta\big\rangle_{O_T} = \int_{[0,T]\times [0,1]} v\ \dd\zeta.
\ED

The equation \ref{Eq:1.1} is characterized by the two reflection measures $\eta_{\pm}$. They act as a force to prevent the solution from leaving the physical domain $[-1,1]$.
They appears naturally because the logarithmic nonlinearity is not strong enough. It can also be seen as a Lagrange multiplier for the condition ``$X(t,x) \in [-1,1]$ for all $t>0$''.
This interpretation is strongly linked to the contact conditions $\langle (1+X) ,\eta_{-}\rangle_{O_{T}} = \langle (1-X) ,\eta_{+}\rangle_{O_{T}} =0$.
In the white noise case studied in \cite{DEBUSSCHE-GOUDENEGE}, the stationary solution has enough regularity to obtain a $\sL^{1}(O_{T})$ estimate on $f(X)$.
This estimates permits to give a sense to $\int_{0}^{T}\int_{0}^{1}f(X(t,\theta)) Ah(\theta)\ddt\dd\theta$ since $Ah \in \sL^{\infty}(O_{T})$.
Uniqueness can be proved under a nice definition of the weak solution as in \cite{DEBUSSCHE-GOUDENEGE}.

The solution of the linear equation with initial data $x \in \sH$ is given by
\BES
Z(t,\cdot,x) = e^{-tA^2/2}x + \int_0^te^{-(t-s)A^2/2}\sqrt{B}\ \dd W_s.
\EES
As easily seen this process is in $\mathcal{C}([0,+\infty[;\sH)$ (see \cite{MR1207136}). 
In particular, the mean of $Z$ is constant and the law of the process $Z$ is the Gaussian measure:
\BD
Z(t,\cdot,x)\sim\mathcal{N}\big(e^{-tA^2/2}x,Q_t\big),\;
Q_t = \int_0^t\sqrt{B}^{*}e^{-sA^2/2}e^{-sA^2/2}\sqrt{B}\ \dds = \sqrt{B}^{*}(-A)^{-1}(I - e^{-tA^2})\sqrt{B}.
\ED
If we let $t\rightarrow +\infty$, the law of $Z(t,\cdot,x)$ converges to the Gaussian measure on $\sL^2$:
\BD
\mu_c := \mathcal{N}(ce_0,\sqrt{B}(-A)^{-1}\sqrt{B}^{*}), \text{ where } c=\overline{x}.
\ED

In order to solve equation \eqref{Eq:1.1}, we use polynomial approximations of this equation. We denote by $\{f_{n}\}_{n \in \N}$ the sequence of polynomial functions which converges to the function $f$ on $(-1,1)$, defined for $n \in \N$ by: 
\BD
f_{n}(u)=-2\sum_{k=0}^{n}\frac{u^{2k+1}}{(2k+1)} + \lambda u, \text{ for all } u \in \R.
\ED
We also use in a crucial way 
that $u\mapsto f_{n}(u)-\lambda u$ is monotone non-increasing below.

\noindent Then for $n \in \N$, we study for the following polynomial approximation of \eqref{Eq:1.1} with an initial condition $x \in \sH$: 
\BE\label{Eq:1.2}
\left\{\begin{array}{l}
\dd X^n+\frac{1}{2}(A^2X^n+Af_{n}(X^n))\ddt = \sqrt{B}\ \dd W,\\
\\
X^n(0,\cdot)=x.
\end{array}\right.
\EE
This equation has been studied in \cite{MR1359472} in the case $B=I$.  The results generalize immediately and  it can be proved that for any $x\in \sH$, there exists a unique solution $X^n(\cdot,\cdot)$ $a.s.$ in 
$\mathcal{C}([0,T];\sH)\cap \sL^{2n+2}((0,T)\times (0,1))$. 
It is a solution in the mild or weak sense. Moreover the average of $X^n(t,\cdot)$ does not depend on $t$.

For each $c\in \R$, \eqref{Eq:1.2} defines a transition  semigroup $(P^{n,c}_t)_{t\ge 0}$:
\BES
P^{n,c}_t\phi(x) = \E[\phi(X^{n}(t,\cdot))], \; t \geq 0, x \in \sH_{c}, \; \phi \in \mathcal{B}_b(\sH_{c}),\; n \in \N.
\EES
Existence of an invariant measure can be proved as in \cite{MR1359472}.

%







In all the article, C denotes a constant which may depend on parameters and its value may change from one line to another.

\section{{\em A priori} estimates in Hilbert spaces and tightness}\label{S:2}

In this section we prove the tightness of the solutions of approximated equations, and obtain a limit transition semi-group.
Fix $-1<c<1$ and $x$ an initial data in $\sH_{c}$. Now consider the unique solution of \eqref{Eq:1.2} denoted $X^{n}$ in $\sH_{c}$ with initial data $x$.
We are going to prove that the laws of $(X^{n})_{n\in\N}$ are tight in some suitable space. First, we prove a result which only needs the assumption $\Tr_{-1} < +\infty$.
\begin{Prop}\label{Prop:3.1}
Suppose $\Tr_{-1}<+\infty$. The laws of $(X^{n})_{n\in\N}$ are tight in $\sL^{\infty}([0,T];V_{-1}) \cap \sL^{2}([0,T];V_{1})$, and we have
\BE\label{Eq}
\E\left[\int_{0}^{T}|X^{n}(t)|_{1}^2\ddt\right]\leq |x|_{-1}^2+T\mathcal{Q}_{c}(\lambda),
\EE
where $\mathcal{Q}_{c}(\lambda)$ is a polynomial.
\end{Prop}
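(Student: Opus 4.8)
The plan is to obtain \eqref{Eq} as the energy identity in $\sH=V_{-1}$ produced by It\^o's formula applied to the functional $\Phi(h)=\frac12|h|_{-1}^2$ along the solution $X^n$ of \eqref{Eq:1.2}, and then to upgrade the resulting a priori bounds to tightness by a compactness criterion. The assumption $\Tr_{-1}<+\infty$ enters precisely through the It\^o correction.

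First I would apply It\^o's formula to $t\mapsto\Phi(X^n(t))$. Since $\sqrt B e_{0}=0$ the average is frozen at $\overline{X^n(t)}=c$, so $X^n-ce_0$ has zero average and $D\Phi(X^n)=(-A)^{-1}(X^n-ce_0)$. Using $Ae_0=0$, the self-adjointness of $A$, and the identities $(-A)^{-1}A^{2}=-A$ and $A(-A)^{-1}=-I$ on the zero-average subspace, the drift $-\frac12(A^{2}X^n+Af_{n}(X^n))$ produces the dissipative term $-\frac12|X^n|_{1}^{2}$ from the bi-Laplacian and $+\frac12\int_{0}^{1}(X^n-c)f_{n}(X^n)\,\ddtheta$ from the nonlinearity, while the It\^o correction equals $\frac12\Tr_{-1}$ (finite, this is the only use of $\Tr_{-1}<+\infty$). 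Writing $M_t$ for the resulting martingale and taking expectations yields
\[
\E\Phi(X^n(T))+\frac12\E\int_{0}^{T}|X^n(t)|_{1}^{2}\,\ddt=\Phi(x)+\frac12\E\int_{0}^{T}\!\!\int_{0}^{1}(X^n-c)f_{n}(X^n)\,\ddtheta\,\ddt+\frac{T}{2}\Tr_{-1}.
\]

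The heart of the argument is a uniform bound on the nonlinear contribution. Writing $f_{n}(u)=g_{n}(u)+\lambda u$ with $g_{n}(u)=f_{n}(u)-\lambda u$ non-increasing, I would bound the integrand by its pointwise maximum and prove that, for $c\in(-1,1)$ and $n\ge1$,
\[
\sup_{u\in\R}(u-c)f_{n}(u)\le\mathcal Q_{c}(\lambda)\quad\text{uniformly in }n,
\]
with $\mathcal Q_{c}$ a quadratic polynomial. For $|u|\ge2$ every term of index $k\ge2$ in $(u-c)g_{n}(u)$ is $\le0$, so one may retain only the coercive contribution from $k=0,1$, a quartic with negative leading coefficient which dominates the quadratic $\lambda(u-c)u$ and has a maximum of order $\lambda^{2}$. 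For $|u|\le2$ the monotone convergence $f_{n}\to f$ on compact subsets of $(-1,1)$, together with $(u-c)f_{n}(u)\to-\infty$ as $u\to\pm1$ (since $c\in(-1,1)$), gives a bound uniform in $n$. Establishing this uniform-in-$n$ polynomial estimate is the main obstacle, and it genuinely requires $n\ge1$: for $n=0$ and $\lambda>2$ the supremum is $+\infty$.

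With this in hand, \eqref{Eq} follows by discarding the nonnegative term $\E\Phi(X^n(T))$ and absorbing $\frac{T}{2}\Tr_{-1}$ into $\mathcal Q_{c}(\lambda)$, which gives $\E\int_{0}^{T}|X^n|_{1}^{2}\,\ddt\le|x|_{-1}^{2}+T\mathcal Q_{c}(\lambda)$, uniformly in $n$. To control $\sL^{\infty}([0,T];V_{-1})$ I would return to the identity \emph{before} taking expectations and apply the Burkholder--Davis--Gundy inequality to $M_t=\int_{0}^{t}\big((-A)^{-1}(X^n-ce_0),\sqrt B\,\dd W\big)$, whose quadratic variation is bounded by $C\,\Tr_{-1}\int_{0}^{t}|X^n(s)|_{-1}^{2}\,\dds$; a Young absorption then yields a uniform bound on $\E\sup_{t\le T}|X^n(t)|_{-1}^{2}$. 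Finally, tightness would follow by combining the uniform $\sL^{2}([0,T];V_{1})$ and $\sL^{\infty}([0,T];V_{-1})$ bounds with a time-regularity estimate: splitting $X^n=Z+Y^n$ into the stochastic convolution $Z$ (whose law is independent of $n$ and which lives in $\mathcal C([0,T];\sH)$) and the remainder $Y^n$, the above bounds give equicontinuity of $Y^n$ in a negative-order space, so an Aubin--Lions--Simon compactness criterion produces tightness in $\sL^{2}([0,T];V_{\beta})$ for $\beta<1$ together with weak-$*$ tightness in $\sL^{\infty}([0,T];V_{-1})$. This route never requires controlling $f_{n}(X^n)$ in a strong norm, which is exactly what the degenerate noise prevents.
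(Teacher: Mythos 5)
Your proposal follows the same backbone as the paper's proof: both apply It\^o's formula to the squared $V_{-1}$-norm of $X^n$, identify the dissipative term $\int_0^T|X^n(t)|_1^2\,\ddt$, the martingale term, and the It\^o correction $T\,\Tr_{-1}$ (the only place $\Tr_{-1}<+\infty$ is used), and then show that the contribution of the nonlinearity, $\int_0^1 (X^n-c)f_n(X^n)\,\ddtheta$, is bounded above by a constant $\mathcal{Q}_c(\lambda)$ \emph{uniformly in} $n$. Where you differ is in how that uniform bound is produced and in how the energy estimate is upgraded to tightness. You prove a pointwise bound $\sup_u (u-c)f_n(u)\le \mathcal Q_c(\lambda)$ (for $|u|$ large every term $(u-c)u^{2k+1}$ has favourable sign, so only the $k=0,1$ terms matter; for $|u|$ moderate one only has to control the region between $0$ and $c$, where the series $\sum_k|c|^{2k+2}/(2k+1)$ converges), whereas the paper integrates first, handles the $c$-weighted odd powers by H\"older/Young term by term, and reduces everything to the explicit quartic inequality $\frac{y^4}{6}+(1-\lambda)y^2+\frac32(1-\lambda)^2\ge0$ together with a discriminant computation showing $\mathcal P_c(\lambda)=\frac32(1-\lambda)^2-c^2\lambda+F(c)\ge0$, arriving at $\mathcal Q_c(\lambda)=\Tr_{-1}+\mathcal P_c(\lambda)$. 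The two arguments are equivalent in substance, both resting on the sign structure of $f_n$; yours is more modular, the paper's keeps explicit constants. For the tightness upgrade the paper stays at the level of expectations (Poincar\'e plus Gronwall for $\E|X^n(t)|_{-1}^2$, and a Chebyshev/martingale second-moment estimate), reading tightness off boundedness in $\sL^{\infty}([0,T];V_{-1})\cap\sL^{2}([0,T];V_{1})$ with respect to the corresponding weak-$*$/weak topologies; your BDG bound for $\E\sup_t|X^n(t)|_{-1}^2$ plus Aubin--Lions--Simon buys more, namely tightness in the strong topology of $\sL^2([0,T];V_\beta)$, $\beta<1$.

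One caveat on your compactness step. The time-equicontinuity of $Y^n$ does \emph{not} follow from the $\sL^2([0,T];V_1)\cap\sL^\infty([0,T];V_{-1})$ bounds alone, contrary to what your closing sentence suggests: $\partial_t Y^n$ contains $-\frac12 Af_n(X^n)$, and since $f_n$ has degree $2n+1$ those two bounds give no control of $f_n(X^n)$ that is uniform in $n$. You do need a uniform bound on $f_n(X^n)$ in some weak norm, e.g.\ $\sL^1(O_T)$ (so that $Af_n(X^n)$ is bounded in $L^1([0,T];V_{-s})$ for $s$ large). This is available, but only from the positive terms that both you and the paper discard: the same It\^o identity controls $\E\int_{O_T}\sum_{k=0}^n\frac{(X^n)^{2k+2}}{2k+1}\,\ddt\,\ddtheta = -\E\int_{O_T}X^n g_n(X^n)\,\ddt\,\ddtheta$, where $g_n(u)=f_n(u)-\lambda u$, and the elementary inequality $|g_n(u)|\le 2|u\,g_n(u)|+\ln 3$ (valid for all $u\in\R$ and all $n$) then yields the required uniform $\sL^1$ estimate. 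With that addition your route closes; without it, the Aubin--Lions step has a genuine hole.
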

\Proof  
Applying It\^o formula to $|X^{n}(t)|_{-1}^2$, we obtain
\BEAS
|X^{n}(T)|_{-1}^2-|X^{n}(0)|_{-1}^2+\int_{0}^{T}|X^{n}(t)|_{1}^2\ddt 
+2\int_{O_T} \sum_{k=0}^{n}\frac{(X^{n})^{2k+2}}{2k+1}  \dds\ddtheta
+c\int_{O_T} \lambda X^{n} \ \dds\ddtheta
\\
= 2 \int_{0}^{T}( X^{n}(t),\sqrt{B}\ \dd W_{t}) +   T\, \Tr_{-1}
+\int_{O_T} \lambda (X^{n})^2\dds\ddtheta
+2c\int_{O_T} \sum_{k=0}^{n}\frac{(X^{n})^{2k+1}}{(2k+1)}  \dds\ddtheta
\EEAS

Using H\"older's inequality, we have   
\BEAS
\sum_{k=0}^n c \int_{0}^1 \frac{(X^n(t))^{2k+1}}{2k+1} \ddtheta&\leq&
\sum_{k=0}^n \frac{c}{2k+1}\left(\int_{0}^1 (X^n(t))^{2k+2} \ddtheta\right)^{2k+1/2k+2}\\
&\leq&
\sum_{k=0}^n \left(\frac{1}{2k+2}\int_{0}^1 (X^n(t))^{2k+2} \ddtheta+\frac{c^{2k+2}}{(2k+1)(2k+2)}\right).
\EEAS
And since 
\BES
\sum_{k=0}^n \int_{0}^1 \frac{(X^n(t))^{2k+2}}{2k+1} \ddtheta-\sum_{k=0}^n \int_{0}^1 \frac{(X^n(t))^{2k+2}}{2k+2} \ddtheta=\sum_{k=0}^n \int_{0}^1 \frac{(X^n(t))^{2k+2}}{(2k+1)(2k+2)} \ddtheta,
\EES
we obtain
\BEAS
|X^n(T)|_{-1}^2-|X^n(0)|_{-1}^2+\int_{0}^{T}|X^n(t)|_{1}^2\ddt 
+2\int_{O_T} \sum_{k=0}^{n}\frac{(X^n(t))^{2k+2}}{(2k+1)(2k+2)}  \ddt\ddtheta
+c\int_{O_T} \lambda X^n(t) \ \ddt\ddtheta
\\
\leq 2 \int_{0}^{T}(X^n(t),\sqrt{B}\ \dd W_{t}) + T\, \Tr_{-1}
+\int_{O_T} \lambda (X^n(t))^2\ddt\ddtheta
+2T\sum_{k=0}^n \frac{c^{2k+2}}{(2k+1)(2k+2)}.
\EEAS
Using $\overline{X^n(t)} = c$ for all $t\geq0$, we obtain
\BEA\label{Eq:2.1}
|X_c^n(T)|_{-1}^2-|X^n(0)|_{-1}^2+\int_{0}^{T}|X^n(t)|_{1}^2\ddt+ c^2 \lambda T\nonumber\\
+\int_{O_T} \frac{(X^n(t))^4}{6} \ddt\ddtheta+(1-\lambda)\int_{0}^T|X^n(t)|_{0}^2 \ddt
+\int_{O_{T}} 2\sum_{k=2}^n \frac{(X^n(t))^{2k+2}}{(2k+1)(2k+2)} \ddt\ddtheta
\\
\leq 2 \int_{0}^{T}(X^n(t),\sqrt{B}\ \dd W_{t}) + T\, \Tr_{-1} +2T\sum_{k=0}^n \frac{c^{2k+2}}{(2k+1)(2k+2)}.\nonumber
\EEA
Remark that
\BES
\frac{y^4}{6}+(1-\lambda) y^2 + \frac{3}{2}(1-\lambda)^2 \geq0,
\EES
for any real number $y$. Let $y=X^n(t)$ and integrate on $O_{T}$, it follows that
\BE\label{Eq:2.2}
\int_{O_{T}} \frac{(X^n(t))^4}{6} \ddt\ddtheta +(1- \lambda)\int_{0}^T |X^n(t)|_{0}^2 \ddt+ \frac{3}{2}T(1-\lambda)^2 \geq0.
\EE
Combining \eqref{Eq:2.1} and \eqref{Eq:2.2}, we obtain
\BEAS
|Xc^n(T)|_{-1}^2-|X^n(0)|_{-1}^2+\int_{0}^{T}|X^n(t)|_{1}^2\ddt
+\int_{O_{T}} 2\sum_{k=2}^n \frac{(X^n(t))^{2k+2}}{(2k+1)(2k+2)} \ddt\ddtheta
\\
\leq 2 \int_{0}^{T}(X^n(t),\sqrt{B}\ \dd W_{t}) +   T\, \Tr_{-1} +\frac{3}{2}T(1-\lambda)^2- c^2 \lambda T+2T\sum_{k=0}^n \frac{c^{2k+2}}{(2k+1)(2k+2)}.
\EEAS
Or simply
\BEAS
|X^n(T)|_{-1}^2-|Xn(0)|_{-1}^2+\int_{0}^{T}|X^n(t)|_{1}^2\ddt&\leq& 2 \int_{0}^{T}(X^n(t),\sqrt{B}\ \dd W_{t})\\
&+& T\left( \Tr_{-1} +\frac{3}{2}(1-\lambda)^2- c^2 \lambda +F(c)\right).
\EEAS 
Remark that for all $c\in(-1,1)$ the polynomial function $\mathcal{P}_{c} : \lambda \mapsto\frac{3}{2}(1-\lambda)^2- c^2 \lambda +F(c)$ is always nonnegative. Indeed, if we compute its classical discriminant, we find
\BEAS
\Delta_{\mathcal{P}_{c}} &=& (c^2+3)^2-6\left(\frac{3}{2}+F(c)\right)\\
&=&c^4+6c^2+9-9-12\sum_{k=0}^{+\infty}\frac{c^{2k+2}}{(2k+1)(2k+2)}\\
&=&-12\sum_{k=2}^{+\infty}\frac{c^{2k+2}}{(2k+1)(2k+2)}\leq0.
\EEAS
So the minimum of $\mathcal{P}_{c}$ is attained in $\lambda^* = \frac{c^{2}}{3}+1$ such that
\BD
\mathcal{P}_{c}(\lambda^*) = 2\sum_{k=2}^{+\infty}\frac{c^{2k+2}}{(2k+1)(2k+2)} \geq 0.
\ED
Denote $\mathcal{Q}_{c}(\lambda) = \Tr_{-1}+\mathcal{P}_{c}(\lambda)$. Using Poincar\'e's inequality and taking the expectation, we obtain
\BEAS
\E\left[|X^n(T)|_{-1}^2-|Xc^n(0)|_{-1}^2+\pi^4\int_{0}^{T}|X^n(t)|_{-1}^2\ddt\right]&\leq&T\mathcal{Q}_{c}(\lambda).
\EEAS
And the Gronwall lemma implies for all $t\in [0,T]$
\BEAS
\E\left[|X^n(t)|_{-1}^2\right] \leq \left(\E\left[|X^n(0)|_{-1}^2\right]-\frac{\mathcal{Q}_{c}(\lambda)}{\pi^4}\right)\exp(-\pi^4t) + \frac{\mathcal{Q}_{c}(\lambda)}{\pi^4}.
\EEAS
Under our assumption on the trace class property of the operator $\sqrt{B}$, there there exists a constant $C$ such that
\BEAS
\E\left[ \left(\int_{0}^{T} (X^n(t), \sqrt{B}\ \dd W_{t})\right)^2 \right]&=& \E\left[\int_{0}^{T} | \sqrt{QB}^{*}\  X^n(t)|^2_{-1}\ddt\right]\\
&\leq&C\  \E\left[\int_{0}^{T} | X^n(t)|^2_{-1}\ddt\right].
\EEAS
We set 
\BEAS
\varphi_{n} &=& |X^n(T)|_{-1}^2-| X^n(0)|_{-1}^2+\int_{0}^{T}| X^n(t)|_{1}^2\ddt
-T\mathcal{Q}_{c}(\lambda)\\
&&-\int_{O_T} f_{n_{k}}\left(X^n(t)\right)\left(X^n(t) -c\right) \ddt\ddtheta
\EEAS
such that
\BEAS
M^{2}\P(|\varphi_n|\geq M)&\leq&\E\left[\phi_{n}^{2}\right] \leq \E\left[\left(2 \int_{0}^{T}(X^n(t),\sqrt{B}\ \dd W_{t})\right)^{2}\right]\\
&\leq & C\  \E\left[\int_{0}^{T} | X_c^n(t)|^2_{-1}\ddt\right]\\
&\leq& C \left(\left(\E\left[| X_c^n(0)|_{-1}^2\right]-\frac{\mathcal{Q}_{c}(\lambda)}{\pi^4}\right)\left(1-\exp(-\pi^4T)\right) + T\mathcal{Q}_{c}(\lambda)\right).
\EEAS

we obtain
\BES
|X^{n}(T)|_{-1}^2-|X^{n}(0)|_{-1}^2+\int_{0}^{T}|X^{n}(t)|_{1}^2\ddt\leq 2 \int_{0}^{T}(X^{n}(t),\sqrt{B}\ \dd W_{t})+T\mathcal{Q}_{c}(\lambda),
\EES
giving the boundedness of expectation in $\sL^{2}([0,T];V_{1})$
\BES
\E\left[\int_{0}^{T}|X^{n}(t)|_{1}^2\ddt\right]\leq \E\left[|X^{n}(0)|_{-1}^2\right]+T\mathcal{Q}_{c}(\lambda).
\EES
Using Poincar\'e's inequality, and taking the expectation, we obtain
\BEAS
\E\left[|X^{n}(T)|_{-1}^2\right]+\pi^{4}\E\left[\int_{0}^{T}|X^{n}(t)|_{-1}^2\ddt\right]&\leq& \E\left[|X^{n}(0)|_{-1}^2\right] +T\mathcal{Q}_{c}(\lambda).
\EEAS
Using Gronwall lemma implies for all $t\in[0,T]$
\BES
\E\left[|X^{n}(t)|_{-1}^2\right] \leq \left(\E\left[|X^{n}(0)|_{-1}^2\right]-\frac{\mathcal{Q}_{c}(\lambda)}{\pi^4}\right)\exp(-\pi^4t) + \frac{\mathcal{Q}_{c}(\lambda)}{\pi^4},
\EES
giving the desired boundedness of expectation in $\sL^{\infty}([0,T];V_{-1}) \cap \sL^{2}([0,T];V_{1})$.\EProof

\begin{Prop}\label{Prop:3.2}
Suppose $x \in V_{0}$ and $\Tr_{0}<+\infty$. Then the laws of $(X^{n})_{n\in\N}$ are tight in $\sL^{2}([0,T];V_{2}) \cap \sL^{\infty}([0,T];V_{0})$, and we have
\BES
2\int_{O_T} (\nabla X^{n}(t,\theta))^{2} \sum_{k=0}^{n}(X^{n}(t,\theta))^{2k}\ddt \ddtheta \leq 2\int_{0}^{T}\langle\sqrt{B}^{*}X^{n}(t),\dd W_{t}\rangle + |x|_{0}^{2} + T\,\Tr_{0}.
\EES
\end{Prop}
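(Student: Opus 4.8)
The plan is to repeat the It\^o-formula computation of Proposition \ref{Prop:3.1}, but now for the $V_0$-norm $t\mapsto|X^n(t)|_0^2$, pairing \eqref{Eq:1.2} with $X^n$ in the $\sL^2$ inner product $\langle\cdot,\cdot\rangle$ instead of in $\sH=V_{-1}$. Writing the drift as $-\frac12 A^2X^n-\frac12 Af_n(X^n)$, self-adjointness of $A$ turns the bi-Laplacian term into $\langle X^n,A^2X^n\rangle=|X^n|_2^2$, while the It\^o correction of the additive noise, measured in the $\sL^2$ metric, is $\Tr_0$, finite by hypothesis. The whole difficulty is concentrated in the nonlinear term.

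For that term I would shift $A$ onto $X^n$, $\langle X^n,Af_n(X^n)\rangle=\langle AX^n,f_n(X^n)\rangle$, and integrate by parts in the space variable. Because $X^n(t,\cdot)\in D(A)$ obeys the Neumann condition $\partial_\theta X^n=0$ at $\theta\in\{0,1\}$, the boundary contributions vanish and one is left with $\langle AX^n,f_n(X^n)\rangle=-\int_0^1 f_n'(X^n)(\partial_\theta X^n)^2\,\ddtheta$. Since $f_n'(u)=\lambda-2\sum_{k=0}^n u^{2k}$, this is precisely the quantity appearing in the statement up to a $\lambda$-term:
\BES
-\langle X^n,Af_n(X^n)\rangle=-2\int_0^1(\partial_\theta X^n)^2\sum_{k=0}^n(X^n)^{2k}\,\ddtheta+\lambda|X^n|_1^2.
\EES
Integrating in time and collecting everything yields the identity $|X^n(T)|_0^2-|x|_0^2+\int_0^T|X^n(t)|_2^2\,\ddt+2\int_{O_T}(\partial_\theta X^n)^2\sum_{k=0}^n(X^n)^{2k}\,\ddt\,\ddtheta=\lambda\int_0^T|X^n(t)|_1^2\,\ddt+T\,\Tr_0+2\int_0^T\langle\sqrt{B}^*X^n(t),\dd W_t\rangle$.

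To obtain the stated inequality I would drop the nonnegative term $|X^n(T)|_0^2$ and absorb the remaining $\lambda\int_0^T|X^n|_1^2$ into $\int_0^T|X^n|_2^2$ through the spectral-gap inequality $|h|_1^2\le\pi^{-2}|h|_2^2$ (the first nonzero Neumann eigenvalue being $\pi^2$), exactly the Poincar\'e tool already used in Proposition \ref{Prop:3.1}; in the parameter range of interest $\lambda\le\pi^2$, so that $\lambda\int_0^T|X^n|_1^2\le\int_0^T|X^n|_2^2$ and the two terms cancel, leaving the claimed bound.

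Taking expectations removes the martingale and, together with the two discarded nonnegative terms, delivers the uniform estimates $\sup_n\E[\sup_{t\le T}|X^n(t)|_0^2]<+\infty$ and $\sup_n\E[\int_0^T|X^n(t)|_2^2\,\ddt]<+\infty$; tightness in $\sL^2([0,T];V_2)\cap\sL^\infty([0,T];V_0)$ then follows as in Proposition \ref{Prop:3.1}, by combining these bounds with a fractional-in-time regularity estimate and an Aubin--Lions--Simon compactness argument. I expect the principal obstacle to be the rigorous justification of the It\^o formula and of the spatial integration by parts for the high-degree polynomial $f_n$ --- one must give meaning to $\langle X^n,Af_n(X^n)\rangle$ and control the top-order contribution $\int(\partial_\theta X^n)^2(X^n)^{2n}$ --- which should be handled by a Galerkin (or Yosida) approximation together with the $\mathcal{C}([0,T];\sH)\cap\sL^{2n+2}((0,T)\times(0,1))$ regularity recalled after \eqref{Eq:1.2}.
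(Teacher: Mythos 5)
Your proposal follows the same route as the paper's own proof: It\^o's formula for $|X^n(t)|_0^2$, self-adjointness of $A$ to turn the bi-Laplacian into $\int_0^T|X^n|_2^2\,\ddt$, spatial integration by parts with the Neumann condition to rewrite $\langle AX^n,f_n(X^n)\rangle$ as $-\int_0^1 f_n'(X^n)(\nabla X^n)^2\,\ddtheta$, then discarding nonnegative terms and using expectation, Poincar\'e and Gronwall. The one substantive divergence is the sign of the $\lambda$-term, and there your computation is the correct one: the drift contains $-\tfrac{\lambda}{2}AX^n$ and $-\lambda\langle X^n,AX^n\rangle=+\lambda|X^n|_1^2$, so the identity reads
\[
|X^n(T)|_0^2-|x|_0^2+\int_0^T|X^n(t)|_2^2\,\ddt+2\int_{O_T}(\nabla X^n)^2\sum_{k=0}^n(X^n)^{2k}\,\ddt\,\ddtheta=\lambda\int_0^T|X^n(t)|_1^2\,\ddt+2\int_0^T\langle \sqrt{B}^{*}X^n(t),\dd W_t\rangle+T\,\Tr_0,
\]
with the destabilizing term $+\lambda\int_0^T|X^n|_1^2\,\ddt$ on the right (consistent with Proposition \ref{Prop:3.1}, where $+\lambda\int_{O_T}(X^n)^2$ likewise lands on the right). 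The paper instead records this term as $-\lambda\int_{O_T}(\nabla X^n)^2$ on the right-hand side and then ``eliminates the positive terms,'' thereby silently dropping a term of the \emph{wrong} sign. So your extra step --- absorbing $\lambda\int_0^T|X^n|_1^2\,\ddt$ into $\int_0^T|X^n|_2^2\,\ddt$ via $|h|_1^2\leq\pi^{-2}|h|_2^2$ --- is not a detour; it is exactly what is needed to make the dropped-terms argument legitimate.

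The price is your hypothesis $\lambda\leq\pi^2$, which appears nowhere in the statement and is not ``the parameter range of interest'' for this paper: assumption \eqref{eq.N} only requires $\lambda<\tfrac12(N+1)^2$, so Section \ref{S:3} explicitly allows $\lambda>\pi^2$ once the noise acts on enough modes. For such $\lambda$ the displayed inequality is genuinely in doubt: by your identity it is equivalent to the pathwise bound $\lambda\int_0^T|X^n|_1^2\,\ddt\leq |X^n(T)|_0^2+\int_0^T|X^n|_2^2\,\ddt$, which Poincar\'e no longer guarantees and which can heuristically fail in the linearly unstable regime (take $B\equiv0$ and an initial datum close to a constant, with energy concentrated on the first mode). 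What does survive for every $\lambda$, and is all that tightness requires, is an $n$-uniform bound with a constant: absorb part of $\lambda\int_0^T|X^n|_1^2\,\ddt$ into the $k=0$ term of the sum (since $2\sum_{k=0}^n(X^n)^{2k}\geq2$ pointwise), or bound $\lambda\,\E\int_0^T|X^n|_1^2\,\ddt$ by \eqref{Eq} of Proposition \ref{Prop:3.1}. Two minor points: your claim $\sup_n\E[\sup_{t\leq T}|X^n(t)|_0^2]<+\infty$ needs a Burkholder--Davis--Gundy estimate on the stochastic integral, not merely ``taking expectations'' (the paper itself only obtains $\sup_{t\leq T}\E|X^n(t)|_0^2$ via Gronwall); and the Galerkin justification of It\^o's formula that you flag is indeed the standard repair --- the paper passes over it in silence.
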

\Proof
Applying Ito formula to $|X^{n}(t)|_{0}^2$, we obtain
\BEAS
|X^{n}(T)|_{0}^2-|X^{n}(0)|_{0}^2+\int_{0}^{T}|X^{n}(t)|_{2}^2\ddt 
+2\int_{O_T} \nabla X^{n}(t) \sum_{k=0}^{n}\frac{\nabla ((X^{n}(t))^{2k+1})}{2k+1}  \ddt\ddtheta
\\
= - \lambda\int_{O_T} (\nabla X^{n})^2\dds\ddtheta +
2 \int_{0}^{T}\langle X^{n}(t),\sqrt{B}\ \dd W_{t}\rangle +   T\, \Tr_{0}\\
 = |X^{n}(T)|_{0}^2-|X^{n}(0)|_{0}^2+\int_{0}^{T}|X^{n}(t)|_{2}^2\ddt 
+4\int_{O_T} (\nabla X^{n}(t))^{2} \left(\sum_{k=0}^{n}(X^{n}(t))^{2k}\right)\ddt\ddtheta.
\EEAS
Eliminating the positive terms, we obtain the desired estimation. Moreover taking the expectation, we obtain
\BEAS
\E\left[\int_{0}^{T}|X^{n}(t)|_{2}^2\ddt\right]&\leq& \E\left[|X^{n}(0)|_{0}^2\right] 
+ T\, \Tr_{0}.
\EEAS
Using Poincar\'e's inequality, taking the expectation, and using Gronwall lemma implies for all $t\in [0,T]$
\BEAS
\E\left[|X^{n}(t)|_{0}^2\right] \leq \left(\E\left[|X^{n}(0)|_{0}^2\right]-\frac{\Tr_{0}}{\pi^4}\right)\exp(-\pi^4t) + \frac{\Tr_{0}}{\pi^4}.
\EEAS\EProof

By Proposition \ref{Prop:3.1} and classical diagonalization procedure, we obtain
\begin{Prop}\label{Prop:3.3}
For all $c\in (-1,1)$, there exists a transition semi-group $P^{c}_t$, $t\geq0$ on $\mathcal{B}_c(\sH)$  and a subsequence $\{n_k\} $ such  that
$P^{n_k,c}_t\varphi(x) \to P^{c}_t\varphi(x)$ as $n_k\to \infty$, for all $\varphi \in \mathcal{B}_c(\sH)$, $x\in \sH$, $t\geq0$.
\end{Prop}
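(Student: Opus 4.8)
The plan is to combine the a priori bounds of Proposition \ref{Prop:3.1} with Prokhorov's theorem, a diagonal extraction over a countable set of initial data, and the uniform-in-$n$ stability of \eqref{Eq:1.2} with respect to the initial condition. First I would fix $c\in(-1,1)$ and, for each initial datum $x\in\sH_c$, regard the law of the trajectory $X^{n}(\cdot)$ as a probability measure on the path space $\mathcal{Y}:=\sL^{\infty}([0,T];V_{-1})\cap\sL^{2}([0,T];V_{1})$. By Proposition \ref{Prop:3.1} this family of laws is tight in $\mathcal{Y}$, so Prokhorov's theorem yields a weakly convergent subsequence with some limit law $\mu_x$ on $\mathcal{Y}$. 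The goal is to turn these limiting laws into a single semigroup $P^{c}_t$ together with a single subsequence $\{n_k\}$ realising all the required convergences.

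The subtle point is that I want convergence of the one-dimensional marginals $\mathrm{Law}(X^{n}(t))$ at a \emph{fixed} time $t$, whereas weak convergence in $\mathcal{Y}$ only controls the trajectory for almost every $t$. To handle this I would pass to a Skorokhod representation, on which $X^{n_k}\to X$ almost surely in $\mathcal{Y}$, deduce $X^{n_k}(t)\to X(t)$ in $\sH$ for almost every $t$, and then upgrade to every $t$ using the equicontinuity in time of $t\mapsto P^{n,c}_t\varphi(x)$, which holds uniformly in $n$ by the mild formulation together with the hypothesis $\Tr_{-1}<+\infty$. For $\varphi$ bounded and continuous this gives $P^{n_k,c}_t\varphi(x)\to\int_{\mathcal{Y}}\varphi(\omega(t))\,\mu_x(\dd\omega)$ for every $t\geq0$. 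Running this argument simultaneously along a countable dense set $\{x_j\}\subset\sH_c$ and extracting a diagonal subsequence produces one sequence $\{n_k\}$ valid for every $x_j$.

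To remove the restriction to the dense set and to obtain the semigroup law itself I would exploit that $u\mapsto f_{n}(u)-\lambda u$ is non-increasing: for the difference $R=X^{n}-Y^{n}$ of two solutions driven by the same noise and started from $x,y$, the stochastic terms cancel and the monotonicity yields the pathwise, $n$-independent estimate $|X^{n}(t)-Y^{n}(t)|_{-1}\le e^{Ct}|x-y|_{-1}$. Hence the maps $x\mapsto P^{n,c}_t\varphi(x)$ are equicontinuous in $x$ uniformly in $n$, the convergence extends from $\{x_j\}$ to all $x\in\sH_c$, and the limit $P^{c}_t\varphi$ is continuous (the Feller property). The remaining and principal obstacle is to check that $(P^{c}_t)_{t\ge0}$ is genuinely a semigroup: as stressed in the introduction, the limit process is \emph{not} identified as the solution of any stochastic equation, so the Markov property is not available a priori. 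I would instead pass the Chapman--Kolmogorov identity $P^{n_k,c}_{t+s}\varphi=P^{n_k,c}_t\,P^{n_k,c}_s\varphi$ to the limit, using the Feller property and the uniform equicontinuity to bound $P^{n_k,c}_t\big(P^{n_k,c}_s\varphi-P^{c}_s\varphi\big)$ and the convergence for the fixed continuous function $P^{c}_s\varphi$. This convergence is established for $\varphi$ bounded and continuous, which is what the weak convergence of the marginals provides and which suffices to define and identify $P^{c}_t$ on the class in the statement.
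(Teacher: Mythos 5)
Your overall route---tightness from Proposition \ref{Prop:3.1}, Prokhorov plus a diagonal extraction over a countable dense set of initial data, the uniform-in-$n$ pathwise Lipschitz estimate (which is exactly the paper's \eqref{eq.lip}) to get equicontinuity in $x$ and the Feller property, and then passing the Chapman--Kolmogorov identity to the limit---is the same ``classical diagonalization procedure'' that the paper invokes; in fact the paper's proof of Proposition \ref{Prop:3.3} is a single sentence, and your treatment of the $x$-dependence and of the semigroup law is sensible and goes beyond what is written there. The problem is the one step you wave through, which is precisely the crux.

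The genuine gap is the upgrade from convergence of the time marginals for \emph{almost every} $t$ to convergence for \emph{every} $t$. You assert that $t\mapsto P^{n,c}_t\varphi(x)$ is equicontinuous uniformly in $n$ ``by the mild formulation together with $\Tr_{-1}<+\infty$''. This does not follow. Any such modulus requires an $n$-independent control of the drift increment $\int_s^t A f_n(X^n(r))\,\dd r$ in some negative norm, i.e.\ essentially a uniform bound on $f_n(X^n)$ in $\sL^1(O_T)$ \emph{together with equi-integrability in time}. The energy estimate behind Proposition \ref{Prop:3.1} only bounds $\E\int_{O_T}\sum_{k}(X^n)^{2k+2}/\big((2k+1)(2k+2)\big)$, an antiderivative-type quantity that remains bounded when $X^n$ approaches $\pm1$, whereas $f_n(\pm1)\sim \mp\ln n$ blows up there; so no uniform control of $f_n(X^n)$ (let alone a time modulus) is available. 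This is not a technicality: the missing estimate on the nonlinearity is exactly the reason the paper concedes it cannot prove tightness in $\mathcal{C}([0,T]\times[0,1];[-1,1])$ nor identify the limit as a solution of an SPDE, so it cannot be recovered ``from the mild formulation''. Any fix (e.g.\ tightness in $C([0,T];V_{-\gamma})$ for large $\gamma$ via an Aldous-type criterion, or a monotonicity/Cauchy argument for $X^n-X^m$) runs into the same missing bound and would need a genuinely new estimate. Two secondary points: Skorokhod representation is invoked in $\sL^{\infty}([0,T];V_{-1})$, which is not separable, so you should work in a Polish space such as $\sL^{2}([0,T];V_{\gamma})$, $\gamma<1$; and weak convergence of laws can only give $P^{n_k,c}_t\varphi(x)\to P^c_t\varphi(x)$ for bounded \emph{continuous} $\varphi$, not for all $\varphi\in\mathcal{B}_c(\sH)$ as the statement literally asks---you acknowledge this, and it is a defect of the paper's formulation as well, but it should be stated as a restriction of what is actually proved rather than as something that ``suffices''.
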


\section{Ergodicity properties} \label{S:3}
Existence of an invariant measure follows by a classical argument of compactness and by the estimate
\begin{equation}  \label{eq.lip}
   |X^n(t,x)-X^n(t,y)|_{-1}\leq e^{\lambda t}\|x-y\|_{-1}.
\end{equation}
which holds any $x,y\in \sH_c$, $t>0$.
\begin{Th} \label{prop.ex.inv.meas}
 For any  $c\in (-1,1)$ there exists an invariant measure for the semigroup $P^c_t$, $t\geq0$.
\end{Th}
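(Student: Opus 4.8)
The plan is to run the Krylov--Bogoliubov procedure at the level of the approximating semigroups $(P^{n,c}_t)$ and then pass to the limit, since for the limit semigroup $P^c_t$ we only dispose of the convergence of Proposition \ref{Prop:3.3} and of the a priori bounds, and not of an underlying solution process. Recall that each $P^{n,c}_t$ admits an invariant measure $\nu^n_c$ on $\sH_c$ (as in \cite{MR1359472}). I would first show that the family $\{\nu^n_c\}_{n\in\N}$ is tight on $\sH_c$, extract a weakly convergent subsequence, and then prove that its limit $\nu_c$ is invariant for $P^c_t$.

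To obtain tightness I would use stationarity together with the energy balance behind Proposition \ref{Prop:3.1}, namely $\E[|X^n(T)|_{-1}^2]-\E[|X^n(0)|_{-1}^2]+\E[\int_0^T|X^n(t)|_1^2\,\ddt]\le T\mathcal{Q}_c(\lambda)$. Starting the process in its invariant state $\nu^n_c$, the second moment $\int_{\sH_c}|h|_{-1}^2\,\nu^n_c(\dd h)$ is finite by the Gronwall bound of Proposition \ref{Prop:3.1}, and the two boundary terms coincide and cancel, leaving $\int_0^T\E_{\nu^n_c}[|X^n(t)|_1^2]\,\ddt\le T\mathcal{Q}_c(\lambda)$, hence $\int_{\sH_c}|h|_1^2\,\nu^n_c(\dd h)\le\mathcal{Q}_c(\lambda)$ uniformly in $n$. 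Since the embedding $V_1\hookrightarrow V_{-1}$ is compact, the balls $\{h\in\sH_c:|h|_1\le R\}$ are compact in $\sH_c$, and by Chebyshev $\nu^n_c(\{|h|_1>R\})\le R^{-2}\mathcal{Q}_c(\lambda)$ uniformly in $n$; this yields the desired tightness.

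By Prohorov's theorem there is a subsequence, which I would align with the subsequence $\{n_k\}$ of Proposition \ref{Prop:3.3}, along which $\nu^{n_k}_c\to\nu_c$ weakly. It then remains to pass to the limit in the stationarity identity $\int P^{n_k,c}_t\varphi\,\dd\nu^{n_k}_c=\int\varphi\,\dd\nu^{n_k}_c$ for $\varphi$ bounded and Lipschitz in the $|\cdot|_{-1}$ metric. The right-hand side tends to $\int\varphi\,\dd\nu_c$ by weak convergence. For the left-hand side, the key tool is \eqref{eq.lip}: driving the two approximations by the same $W$ gives $|P^{n,c}_t\varphi(x)-P^{n,c}_t\varphi(y)|\le e^{\lambda t}[\varphi]_{\mathrm{Lip}}|x-y|_{-1}$, so the functions $P^{n_k,c}_t\varphi$ are uniformly bounded and uniformly Lipschitz, hence equicontinuous. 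Combined with the pointwise convergence $P^{n_k,c}_t\varphi\to P^c_t\varphi$ of Proposition \ref{Prop:3.3} and the tightness of $\{\nu^{n_k}_c\}$, an Arzel\`a--Ascoli argument on compact sets gives $\int P^{n_k,c}_t\varphi\,\dd\nu^{n_k}_c\to\int P^c_t\varphi\,\dd\nu_c$. Therefore $\int P^c_t\varphi\,\dd\nu_c=\int\varphi\,\dd\nu_c$ for all such $\varphi$, and since bounded Lipschitz functions are measure-determining, $\nu_c$ is invariant for $P^c_t$.

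I expect the main obstacle to be precisely this last passage to the limit: because the limit is described only through $P^c_t$ and not through a trajectory, one cannot couple paths of the limiting object, and the convergence of $\int P^{n_k,c}_t\varphi\,\dd\nu^{n_k}_c$ mixes a varying integrand with a varying measure, which is not automatic from pointwise convergence alone. The equicontinuity furnished by \eqref{eq.lip} is exactly what makes this step rigorous. A secondary point to verify is that $P^c_t$ genuinely inherits the semigroup property and the (Lipschitz) Feller regularity from the uniform bounds, both of which are implicitly needed for the Krylov--Bogoliubov identity to be meaningful.
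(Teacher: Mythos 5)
Your proposal is correct and follows essentially the same route as the paper's own proof: Krylov--Bogoliubov applied to each approximating semigroup $P^{n,c}_t$ (via the estimate \eqref{Eq}), tightness of the invariant measures $\{\nu^n\}$, extraction of a weak limit, and verification of invariance for $P^c_t$ using the uniform Lipschitz bound \eqref{eq.lip} together with Proposition \ref{Prop:3.3}. The only difference is one of detail: the paper dismisses the tightness of $\{\nu^n\}$ and the passage to the limit in the stationarity identity as straightforward, whereas you supply the arguments (stationary energy balance giving a uniform $V_1$ moment, compact embedding plus Chebyshev, and the equicontinuity/Arzel\`a--Ascoli step) that make those claims rigorous.
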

\begin{proof}
 Fix $c\in (-1,1)$. 
 By \eqref{Eq} and by the Krylov-Bogoliubov criterion, for any $n\in\N$   there exists an invariant measure for the transition semigroup $P^{c,n}$,
 $t\geq0$. 
 Let  $\nu$ be as a weak${}^*$ limit of the sequence $\{\nu^n\}$.
 By using \eqref{eq.lip} and the thightness of $\nu^n$ 
 it is strightforward  to verify that $\nu$ is invariant for $P_t$.
\end{proof}


In order to show uniqueness of such invariant measure, 
we shall show (under some suitable assumption on the noise, see \eqref{assumptionB}) that its transition semigroup $P_t^c$ enjoys the asymptotic strong Feller property and it is topologically irreducible.
We recall that since the noise in degenerate, we are not able to use the same techniques of \cite{MR2568285}; 
in particular, the Bismut-Elworthy formula does not apply, and we are not able to show that the semigroup $P_t^c$ is strong Feller.
 
Asymptotic strong Feller property was introduced in \cite{MR2259251} in order to study ergodicity properties of a $2D$ Navier-Stokes equation perturbed by a very degenerate noise.
The exact definition can be found in \cite[Definition 3.8]{MR2259251}. 
Here we consider the property in the following form, which shall be sufficient for our pourpose:
\begin{Prop}[Proposition 3.12 in \cite{MR2259251}]
Let $t_n$ and $\delta_n$ be two positive sequences with $\{t_n\}$ nondecreasing and $\{\delta_n\}$ converging to zero.  
A semigroup $P_t$ on a Hilbert space $H$ is asymptotically strong Feller if, for all $\varphi:H\to\R$ with $|\varphi|_\infty$ and $|\nabla \varphi|_\infty$ finite,
\[
   |\nabla P_t\varphi(x)|\leq C(\|x\|_H)(|\varphi|_\infty+\delta_n|\nabla \varphi|_\infty)
\]
for all $n$, where $C:\R^+\to\R^+$ is a fixed nondecrasing  function.
\end{Prop}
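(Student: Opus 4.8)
The statement is precisely Proposition 3.12 of \cite{MR2259251}, so the plan is to reproduce the Hairer--Mattingly argument, which converts the assumed gradient bound into a Wasserstein contraction estimate for the transition probabilities. Recall that asymptotic strong Feller means the following: there exists a totally separating system of pseudo-metrics $\{d_n\}$ on $H$ (an increasing sequence of continuous pseudo-metrics with $d_n(u,v)\to 1$ for every $u\neq v$) together with times $\{t_n\}$ such that
\[
   \inf_{U\ni x}\ \limsup_{n\to\infty}\ \sup_{y\in U}\ \mathcal{W}_{d_n}\big(P_{t_n}(x,\cdot),P_{t_n}(y,\cdot)\big)=0,
\]
where $\mathcal{W}_{d_n}$ is the Kantorovich--Wasserstein distance attached to $d_n$ and the infimum runs over open neighbourhoods $U$ of $x$. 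The goal is to build such a system out of the sequence $\delta_n$ and to verify this double limit.

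First I would fix the pseudo-metrics
\[
   d_n(u,v)=1\wedge\frac{\|u-v\|_H}{\delta_n},\qquad u,v\in H,
\]
together with the given times $t_n$. Since $\delta_n\to 0$, after passing to a subsequence if necessary the family $\{d_n\}$ is increasing, and $d_n(u,v)\to 1$ for every $u\neq v$; hence it is totally separating. I would then invoke Kantorovich--Rubinstein duality to write
\[
   \mathcal{W}_{d_n}\big(P_{t_n}(x,\cdot),P_{t_n}(y,\cdot)\big)=\sup_{\varphi}\big|P_{t_n}\varphi(x)-P_{t_n}\varphi(y)\big|,
\]
the supremum being over $\varphi$ with $d_n$-Lipschitz constant at most $1$. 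The key observation is that any such $\varphi$ has oscillation at most $1$, so after subtracting a constant (which does not change $P_{t_n}\varphi(x)-P_{t_n}\varphi(y)$) one may assume $|\varphi|_\infty\le 1$, while the elementary bound $d_n(u,v)\approx\|u-v\|_H/\delta_n$ for nearby points forces $|\nabla\varphi|_\infty\le 1/\delta_n$.

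The heart of the argument is a one-line estimate along the segment joining $x$ and $y$. Writing $\gamma_s=x+s(y-x)$ and combining the fundamental theorem of calculus with the assumed gradient bound,
\[
   \big|P_{t_n}\varphi(x)-P_{t_n}\varphi(y)\big|\le\|x-y\|_H\sup_{s\in[0,1]}\big|\nabla P_{t_n}\varphi(\gamma_s)\big|\le\|x-y\|_H\sup_{s\in[0,1]}C(\|\gamma_s\|_H)\big(|\varphi|_\infty+\delta_n|\nabla\varphi|_\infty\big).
\]
Since $|\varphi|_\infty\le 1$ and $\delta_n|\nabla\varphi|_\infty\le 1$, the bracket is at most $2$ uniformly in $n$; and for $y\in B(x,\gamma)$ one has $\|\gamma_s\|_H\le\|x\|_H+\gamma$, so monotonicity of $C$ yields
\[
   \sup_{y\in B(x,\gamma)}\mathcal{W}_{d_n}\big(P_{t_n}(x,\cdot),P_{t_n}(y,\cdot)\big)\le 2\,C(\|x\|_H+\gamma)\,\gamma
\]
for every $n$. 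Taking $\limsup_{n\to\infty}$ and then letting $\gamma\to 0$ sends the right-hand side to $0$, which is exactly the asymptotic strong Feller property.

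The only real obstacle is technical and sits in the duality step: the extremal test functions for $\mathcal{W}_{d_n}$ are merely $d_n$-Lipschitz, hence bounded Lipschitz, whereas the hypothesis is phrased for functions that are Fréchet differentiable with $|\nabla\varphi|_\infty<+\infty$. I would close this gap by approximating a bounded Lipschitz $\varphi$ by smooth functions without increasing $|\varphi|_\infty$ or $|\nabla\varphi|_\infty$ by more than $\varepsilon$ — a mollification that requires some care in the infinite-dimensional Hilbert setting — and then passing to the limit in the estimate above. The hypothesis that $\{t_n\}$ be nondecreasing is not used in this computation; it belongs to the abstract framework ensuring that the notion itself is well posed.
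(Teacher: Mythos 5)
The paper does not actually prove this statement: it is imported verbatim as Proposition 3.12 of \cite{MR2259251} (Hairer--Mattingly) and used as a black box, so there is no in-paper proof to compare yours against; the relevant comparison is with the original source. Your reconstruction follows the Hairer--Mattingly argument essentially step for step: the pseudo-metrics $d_n(u,v)=1\wedge\delta_n^{-1}\|u-v\|_H$ (made increasing by passing to a subsequence of the pairs $(t_n,\delta_n)$, which is legitimate since the definition of asymptotic strong Feller only requires the existence of \emph{some} totally separating system and \emph{some} sequence of times), Kantorovich--Rubinstein duality, the reduction of a $d_n$-Lipschitz test function to one with $|\varphi|_\infty\le 1$ and norm-Lipschitz constant at most $\delta_n^{-1}$, the mean-value estimate along the segment joining $x$ and $y$ (valid because the hypothesis presupposes differentiability of $P_{t_n}\varphi$ with the stated gradient bound), and the final estimate $2\,C(\|x\|_H+\gamma)\,\gamma$, which vanishes as the neighbourhood shrinks. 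Your remark that the monotonicity of $\{t_n\}$ plays no role in the computation is also accurate. The one step you flag but do not execute --- replacing the merely Lipschitz extremal functions of the duality by Fr\'echet differentiable ones with the same sup-norm and Lipschitz bounds --- is genuine content (it is exactly the smoothing lemma of the cited paper), but the route you indicate, namely composing with finite-dimensional projections, mollifying there, and passing to the limit by dominated convergence in $P_{t_n}\varphi_k(x)\to P_{t_n}\varphi(x)$, is precisely how that lemma is proved, so I would classify your proposal as a correct rendition of the standard proof rather than as having a gap.
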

Unfortunately, we are not able to show that the semigroup $P^c_t$ is differentiable (in the case of a white noise perturbation the semigroup is differentiable, and the strong Feller property holds, see \cite{MR2568285}).
Also for the semigroup $P_t^{c,n}$, associated to the approximation \eqref{Eq:1.2} we are not able to show its differentiability.
However, we shall use the following variation of the property
\begin{Prop} \label{prop.def.ASF}
 A semigroup $P_t$ on a Hilbert space $H$ is asymptotically strong Feller if there exist two constant $C, \delta>0$ such that for all $\varphi:H\to\R$ with $|\varphi|_\infty$ and $|\nabla \varphi|_\infty$ finite,
\[
   |P_t\varphi(x)-P_t\varphi(y)|\leq C(|x|_{-1}\vee|y|_{-1})(|\varphi|_\infty+e^{-\delta t}|\nabla \varphi|_\infty)|x-y|_H.
\]
\end{Prop}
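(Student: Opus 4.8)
The plan is to verify the abstract definition of the asymptotic strong Feller property (Definition 3.8 in \cite{MR2259251}) directly, through the transportation-distance characterisation, rather than passing through the gradient estimate of the preceding proposition. This detour is forced on us because $P_t^c\varphi$ is not known to be differentiable, so the quantity $|\nabla P_t\varphi|$ is simply not at our disposal. The key observation is that in the proof of Proposition 3.12 of \cite{MR2259251} the gradient bound is used \emph{only} to control the finite difference $|P_{t_n}\varphi(x)-P_{t_n}\varphi(y)|$, and the hypothesis of the present proposition already supplies exactly such a finite-difference bound. So the whole point is to re-run that argument with the given bound in place of the (unavailable) gradient estimate.

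First I would fix a nondecreasing sequence $t_n\uparrow+\infty$, say $t_n=n$, and set $\delta_n=e^{-\delta t_n}$, so that $\delta_n\downarrow 0$. To these I associate the totally separating system of bounded pseudo-metrics
\[
   d_n(x,y)=1\wedge\frac{|x-y|_H}{\delta_n},\qquad x,y\in H,
\]
as in \cite{MR2259251} (they are totally separating precisely because $\delta_n\to 0$). For such a bounded metric the associated transportation distance admits the Kantorovich--Rubinstein representation
\[
   \|P_{t_n}(x,\cdot)-P_{t_n}(y,\cdot)\|_{d_n}=\sup\big\{\,|P_{t_n}\varphi(x)-P_{t_n}\varphi(y)|\ :\ \mathrm{Lip}_{d_n}(\varphi)\leq 1\,\big\}.
\]
Next I would estimate the right-hand side. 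A function $\varphi$ with $\mathrm{Lip}_{d_n}(\varphi)\leq1$ has oscillation at most $1$, so after subtracting a constant (which leaves $P_{t_n}\varphi(x)-P_{t_n}\varphi(y)$ unchanged, since $P_{t_n}1=1$) I may assume $|\varphi|_\infty\leq1$; moreover the estimate $|\varphi(u)-\varphi(v)|\leq|u-v|_H/\delta_n$ gives $|\nabla\varphi|_\infty\leq\delta_n^{-1}$ (reading $|\nabla\varphi|_\infty$ as the $H$-Lipschitz seminorm). Feeding this into the hypothesis at time $t=t_n$ and using the choice $\delta_n=e^{-\delta t_n}$ yields
\[
   |P_{t_n}\varphi(x)-P_{t_n}\varphi(y)|\leq C(|x|_{-1}\vee|y|_{-1})\big(1+e^{-\delta t_n}\delta_n^{-1}\big)|x-y|_H=2\,C(|x|_{-1}\vee|y|_{-1})\,|x-y|_H.
\]

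Taking the supremum over such $\varphi$ and then over $y$ in a ball $U=\{y:|x-y|_H\leq\rho\}$ with $\rho\leq 1$, and using the monotonicity of $C$ together with $|y|_{-1}\leq|x|_{-1}+c_0\rho$ (continuity of $x\mapsto|x|_{-1}$ on $H$) to replace $C(|x|_{-1}\vee|y|_{-1})$ by a nondecreasing function of $\|x\|_H$, I would obtain
\[
   \sup_{y\in U}\|P_{t_n}(x,\cdot)-P_{t_n}(y,\cdot)\|_{d_n}\leq 2\,C(\|x\|_H)\,\rho
\]
uniformly in $n$. Hence $\limsup_n\sup_{y\in U}\|P_{t_n}(x,\cdot)-P_{t_n}(y,\cdot)\|_{d_n}\leq 2\,C(\|x\|_H)\rho$, and letting $\rho\to0$, i.e. taking the infimum over neighbourhoods $U$ of $x$, produces $0$, which is exactly the asymptotic strong Feller property. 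The main obstacle is conceptual rather than computational: one must not invoke the gradient form of the previous proposition, and instead re-derive the transportation-distance estimate from the finite-difference bound. The two genuinely technical points are (i) the Kantorovich--Rubinstein duality together with the identification of the $d_n$-Lipschitz seminorm with the $H$-Lipschitz seminorm, for which one restricts the test functions to a class on which $|\nabla\varphi|_\infty$ is meaningful and approximates by smooth functions; and (ii) the bookkeeping that turns the coefficient $C(|x|_{-1}\vee|y|_{-1})$ into a single nondecreasing function of $\|x\|_H$, using that in the application $H=\sH=V_{-1}$ and that on $\sH_c$ the difference $x-y$ has zero average, so $|x-y|_H=|x-y|_{-1}$.
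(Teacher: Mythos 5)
Your proposal is correct and takes essentially the same approach as the paper: the paper's entire proof is the remark that the argument of Proposition 3.12 in \cite{MR2259251} goes through unchanged once the finite-difference hypothesis (with $\delta_n=e^{-\delta t_n}$) replaces the gradient bound, and this is precisely the observation your transportation-distance argument implements. You simply spell out the pseudo-metric/Kantorovich--Rubinstein details that the paper delegates to the citation.
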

This proposition can be  proved with the same argument used to prove Proposition 3.12 in \cite{MR2259251}.
The right-hand side is clearly derived by the previous proposition, setting $\delta_n=e^{-\delta t}$. 
The only modification consists in the left-side part, where no assumptions on differentiability of $P_t\varphi$ are needed.

The main assumption of this section is
\BE \label{assumptionB}
   B=\sum_{k=1}^\infty b_k \langle \cdot , e_k\rangle e_k
\EE
where $b_k>0$ for $k\in\{1,\ldots,N\}$ and
\BE \label{eq.N}
  \frac12(N+1)^2-\lambda>0. 
\EE
In other words, we assume that for a sufficiently large $N$, $\text{span}\{e_0,\ldots,e_N\}\subset \text{range}(B)$.
This kind of setting is known as \emph{essentially elliptic} (see \cite{MR2568285}).
We think that more degenerate noises can be considered in order to have asymptotic strong Feller property, and this shall be the objet of a forthcoming paper. 

The importance of the asymptotic strong Feller property is that in this case any two distinct ergodic invariant measures have disjoint topological support (see Theorem 3.16 in \cite{MR2259251}).
According to \cite{MR2786645}, we introduce the following 
\begin{Def}
 We say that a semigroup $P_t$ on a Hilbert space $H$ is \emph{weakly topologically irreducible} if for all $x_1, x_2\in H$ there exists $y\in H$ so that for any open set $A$ containing $y$ there exist $t_1,t_2>0$ with
 $P_{t_i}(x_i,A)>0$.  
\end{Def}
The main result of this section is the following
\begin{Th} \label{thm.erg}
 Under assumptions \eqref{assumptionB} and \eqref{eq.N}, for any $c\in (-1,1)$ the semigroup $P_t^c,t\geq0$ has an unique invariant measure.
\end{Th}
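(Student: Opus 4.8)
The existence half is already Theorem~\ref{prop.ex.inv.meas}, so the entire content is \emph{uniqueness}, which I would extract from the Hairer--Mattingly scheme: the asymptotic strong Feller property, in the form of Proposition~\ref{prop.def.ASF}, together with weak topological irreducibility force any two ergodic invariant measures to have both disjoint and overlapping supports, leaving room for only one. Since $P^c_t$ is only available as the limit of the approximating semigroups $P^{n_k,c}_t$ (Proposition~\ref{Prop:3.3}), the plan is to establish both properties for the approximations $P^{n,c}_t$ with constants \emph{independent of $n$} --- using that $u\mapsto f_n(u)-\lambda u$ is non-increasing uniformly in $n$ --- and then pass to the limit.

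For the asymptotic strong Feller estimate I would use an asymptotic coupling. Fix $x,y\in\sH_c$; since $x-y$ has zero average, $\|x-y\|_{-1}=|x-y|_{-1}$. Let $X^n$ solve \eqref{Eq:1.2} from $x$ with noise $W$, and let $\tilde Y^n$ solve \eqref{Eq:1.2} from $y$ driven by $W+\int_0^\cdot v\,\dds$, where the control $v(t)\in\mathrm{span}\{e_1,\ldots,e_N\}\subset\mathrm{range}(B)$ acts only on the low modes. The noise cancels in $\rho=X^n-\tilde Y^n$, which solves the random equation $\dd\rho=-\frac12\big(A^2\rho+A(f_n(X^n)-f_n(\tilde Y^n))\big)\ddt-\sqrt B\,v\,\ddt$. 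Differentiating $|\rho|_{-1}^2$ and using $(A^2\rho,\rho)_{-1}=|\rho|_1^2$ together with $-(A(f_n(X^n)-f_n(\tilde Y^n)),\rho)_{-1}=\langle f_n(X^n)-f_n(\tilde Y^n),\rho\rangle\le\lambda|\rho|_0^2$ reproduces \eqref{eq.lip} in the uncontrolled case. The point of $v$ is to pin the low modes: choosing $v$ so that $P_N\rho$ is driven to $0$, the difference $\rho$ lives in the high modes, where Poincar\'e gives $|\rho|_0^2\le(-\lambda_{N+1})^{-1}|\rho|_1^2$, so the bad term $\frac\lambda2|\rho|_0^2$ is dominated by $\frac12|\rho|_1^2$ \emph{precisely} under \eqref{eq.N}. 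This yields $|\rho(t)|_{-1}\le Ce^{-\delta t}|x-y|_{-1}$ with $\delta>0$ dictated by \eqref{eq.N}, and a finite control cost $\int_0^{+\infty}|v|^2\,\dds\le C|x-y|_{-1}^2$.

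The estimate of Proposition~\ref{prop.def.ASF} then follows by a Girsanov splitting. By Girsanov's theorem $\E[\varphi(Y^n_t)]=\E[\varphi(\tilde Y^n_t)\,\mathcal D_t]$ for the density $\mathcal D_t$ associated to the shift $v$, so that
\[
 P^{n,c}_t\varphi(x)-P^{n,c}_t\varphi(y)=\E\big[(\varphi(X^n_t)-\varphi(\tilde Y^n_t))\,\mathcal D_t\big]+\E\big[\varphi(X^n_t)(1-\mathcal D_t)\big].
\]
I would bound the first term by $|\nabla\varphi|_\infty\,\E[\mathcal D_t^2]^{1/2}\E[|\rho_t|_{-1}^2]^{1/2}\le Ce^{-\delta t}|\nabla\varphi|_\infty|x-y|_{-1}$ using the contraction, and the second by $|\varphi|_\infty\,\E|1-\mathcal D_t|\le C|\varphi|_\infty|x-y|_{-1}$ using that the control cost is of order $|x-y|_{-1}^2$. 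As the constants are uniform in $n$ and depend on the data only through $|x|_{-1}\vee|y|_{-1}$ in a nondecreasing way, passing to the limit along $n_k$ via Proposition~\ref{Prop:3.3} gives the hypothesis of Proposition~\ref{prop.def.ASF} for $P^c_t$, which is therefore asymptotically strong Feller.

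For weak topological irreducibility I would run an approximate-controllability (support) argument: since $B$ is invertible on the low modes and the high modes are strongly dissipated by $-\frac12A^2$ and slaved to the low ones, the controlled deterministic equation steers any initial profile into an arbitrarily small $V_{-1}$-neighborhood of a fixed reachable target $y$ (for instance the equilibrium of the deterministic dynamics in $\sH_c$), so that $P_{t_i}(x_i,A)>0$ for both $x_i$; once more this is first carried out for $X^n$ and transferred to the limit. Finally, asymptotic strong Feller gives that distinct ergodic invariant measures have disjoint supports (Theorem~3.16 in \cite{MR2259251}), while weak irreducibility forces every invariant measure to charge each neighbourhood of the common point $y$, so their supports cannot be disjoint; together with Theorem~\ref{prop.ex.inv.meas} this proves existence and uniqueness. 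The main obstacle is the coupling step: constructing a low-mode control producing genuine $V_{-1}$-contraction with a Girsanov cost of order $|x-y|_{-1}^2$, and doing so with constants uniform in the polynomial index $n$ so that the property survives the passage to the limit $P^{n_k,c}_t\to P^c_t$ --- the condition \eqref{eq.N} being exactly what renders the uncontrolled high-mode part dissipative.
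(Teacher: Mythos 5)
Your overall route is exactly the paper's: existence is Theorem~\ref{prop.ex.inv.meas}, and uniqueness follows from the Hairer--Mattingly criterion (the paper quotes Corollary~1.4 of \cite{MR2786645}: a Feller, weakly topologically irreducible, asymptotically strong Feller semigroup has at most one invariant measure), both properties being established for the approximations \eqref{Eq:1.2} with constants uniform in $n$ and then passed to the limit via Proposition~\ref{Prop:3.3}. The genuine gap is inside your coupling. You pin the low modes exactly: ``choosing $v$ so that $P_N\rho$ is driven to $0$, the difference $\rho$ lives in the high modes''. To keep $\pi_l\rho\equiv 0$ the control must satisfy $\sqrt{B}\,v=-\tfrac12\,\pi_l A\big(f_n(X^n)-f_n(\tilde Y^n)\big)$ (the term $\pi_l A^2\rho$ vanishes once $\pi_l\rho=0$), so the Girsanov cost $\int_0^\infty |v(s)|^2\,\dds$ is governed by the low-mode components of $f_n(X^n)-f_n(\tilde Y^n)$. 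There is no bound of these by $|\rho|_{-1}$ that is uniform in $n$: the Lipschitz constant of $f_n$ blows up with $n$ (even on $[-1,1]$, since $f'$ is singular at $\pm 1$), and the approximating solutions $X^n$, $\tilde Y^n$ are not known to stay in $[-1,1]$ or in any fixed $L^\infty$ ball. Hence the requirement you yourself single out as the crux --- cost of order $|x-y|_{-1}^2$ \emph{uniformly in $n$}, needed both for $\E|1-\mathcal D_t|$ and for $\E[\mathcal D_t^2]$ --- is not delivered by this choice of $v$. The paper's Lemma~\ref{lemma.ASF} is engineered precisely to avoid this: its control $w_n=B^{-1}A\pi_l(\tilde X-X^n(t,y,W))$ is \emph{linear} in the contracting difference and cancels only the destabilizing linear term $\lambda A\pi_l$, while the monotone part $p_n=f_n-\lambda\,\mathrm{id}$ is kept in both equations, where dissipativity gives it a favourable sign without any quantitative estimate; this yields the cost bound \eqref{eq.w_n.0} independent of $n$. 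Equivalently: do not pin $\pi_l\rho$, just add linear damping on the low modes; then you never need to apply Poincar\'e to the whole of $\rho$, and the only uniform-in-$n$ input is monotonicity.

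Your irreducibility argument also departs from the paper's, and it stumbles on the same uniformity issue: a control steering the low modes toward the target must offset $\pi_l A f_n$ along a trajectory not confined to $[-1,1]$, so the lower bound it produces on $P^{c,n}_t(x,A)$ may degenerate as $n\to\infty$, and nothing then survives in the limit semigroup $P^c_t$. The paper's Proposition~\ref{prop.weak1} uses no control at all: writing $X^n=Y^n+W_A$ with $W_A$ the stochastic convolution and using the monotonicity of $f_n-\lambda\,\mathrm{id}$, one obtains the pathwise bound $|Y^n(t)|_{-1}^2\le\int_0^t e^{2C(t-s)}\|f_n(W_A(s))\|_2^2\,\dds$, and the pointwise domination $|f_n(W_A)|\le |f(W_A)|$ makes the relevant event independent of $n$; its positive probability then comes from the Gaussianity of $W_A$. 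If you want to keep a controllability/support proof, the nonlinearity has to be routed through a comparison of this kind rather than cancelled by the control.
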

\begin{proof}
By Corollary 1.4 from \cite{MR2786645}, a Markov semigroup which is Feller,  weakly topologically irreducible and asymptotically strong Feller admits at most 
one invariant probability measure. The proof then follows by Proposition \ref{prop.ASF} and Proposition \ref{prop.weak1}.
\end{proof}

%
%
In order to proceed, we need some apriori estimates on the solution of the approximated problem.
Le us denote by $X^n(t,x,W)$ the solution of \eqref{Eq:1.2} perturbed by the Wiener process $W$.

\begin{Le} \label{lemma.ASF}
There exist $\delta>0$ and a continuous non decreasing function $C:\R^+\to \R^+$ such that any $x,y\in \sH_c$, $n\in\N$ there exists a process $w_n\in L^2(0,\infty;H)$, adapted to $W$, satifying
\BE  \label{eq.W+w}
   \Big|X^n\Big(t,x,W+\int_0^\cdot w_n(s)ds\Big)-X^n(t,y,W)\Big|_{-1}\leq e^{-\delta t}|x-y|_{-1} 
\EE
and
\BE \label{eq.Ew_n}
  \E\big|1-e^{-\frac12\int_0^tw_n^2(s)ds+\int_0^tw_n(s)dW(s)}\big| \leq C(|x|_{-1}\vee |y|_{-1})|x-y|_{-1}.
\EE
\end{Le}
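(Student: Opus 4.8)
The plan is to construct $w_n$ as a feedback control that nudges the copy of \eqref{Eq:1.2} started at $x$ (and driven by the shifted noise $W+\int_0^\cdot w_n\,\dds$) onto the free copy started at $y$, exploiting the crucial fact that the difference of the two solutions carries \emph{no} stochastic integral. Let $v(t)=X^n(t,y,W)$ and seek a control so that $u(t):=X^n\big(t,x,W+\int_0^\cdot w_n\,\dds\big)$ is attracted to $v$. Writing $\rho=u-v$, the two equations share the same $\sqrt{B}\,\dd W$, so $\rho$ solves the random ODE
\[
   \dd\rho=-\tfrac12 A^2\rho\,\ddt-\tfrac12 A\big(f_n(u)-f_n(v)\big)\,\ddt+\sqrt{B}\,w_n\,\ddt,\qquad \rho(0)=x-y\in\sH_c,
\]
which has no Itô term and keeps $\rho$ mean-zero. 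Since by \eqref{assumptionB} the operator $\sqrt{B}$ is boundedly invertible on $\mathrm{span}\{e_1,\dots,e_N\}$, I impose $\sqrt{B}\,w_n=-a\,P_N\rho$, i.e. $w_n=-a\sum_{k=1}^N b_k^{-1/2}\langle\rho,e_k\rangle e_k$, where $P_N$ is the projection onto $\mathrm{span}\{e_1,\dots,e_N\}$ and $a>0$ is a gain to be fixed. Read as a closed system for $(v,\rho)$ (solve $v$, then the $\rho$-ODE, then recover $u=v+\rho$ and $w_n$), the control is adapted and well defined because $f_n$ is polynomial.

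The heart of the argument is the energy estimate, obtained by testing the $\rho$-equation against $\rho$ in $(\cdot,\cdot)_{-1}$. Using $(A^2\rho,\rho)_{-1}=|\rho|_1^2$, the identity $(Ag,\rho)_{-1}=-\langle g,\rho\rangle$, the monotonicity of $u\mapsto f_n(u)-\lambda u$ recalled after \eqref{Eq:1.2} (so that $\langle f_n(u)-f_n(v),u-v\rangle\le\lambda|\rho|_0^2$), and $(\sqrt{B}\,w_n,\rho)_{-1}=-a|P_N\rho|_{-1}^2$, I get pathwise
\[
   \tfrac12\tfrac{\dd}{\ddt}|\rho|_{-1}^2\le-\tfrac12|\rho|_1^2+\tfrac{\lambda}{2}|\rho|_0^2-a|P_N\rho|_{-1}^2.
\]
Splitting into high and low modes, on the high modes the bi-Laplacian dissipation $\tfrac12|\rho|_1^2$ overrides the anti-dissipative term $\tfrac{\lambda}{2}|\rho|_0^2$ precisely under condition \eqref{eq.N}, yielding a negative rate $-c_h|Q_N\rho|_{-1}^2$; on the finitely many low modes all the seminorms $|\cdot|_{-1},|\cdot|_0,|\cdot|_1$ are equivalent, so the feedback $-a|P_N\rho|_{-1}^2$ dominates $\tfrac{\lambda}{2}|P_N\rho|_0^2$ once $a$ is taken large enough. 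This gives $\tfrac{\dd}{\ddt}|\rho|_{-1}^2\le-2\delta|\rho|_{-1}^2$ for some $\delta>0$, which integrates to \eqref{eq.W+w}. Because the estimate is pathwise and $w_n$ involves only finitely many modes, it also yields the \emph{deterministic} bound $\int_0^\infty|w_n(s)|^2\,\dds\le C|x-y|_{-1}^2$, so $w_n\in L^2(0,\infty;H)$.

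For the Girsanov cost \eqref{eq.Ew_n}, set $\mathcal E_t=\exp\big(-\tfrac12\int_0^t|w_n|^2\,\dds+\int_0^t w_n\,\dd W\big)$, the stochastic integral being taken against the cylindrical $W$ on $\sL^2(0,1)$. The deterministic bound on $\int|w_n|^2$ gives Novikov's condition for both $w_n$ and $2w_n$, so the corresponding exponentials are true martingales; writing $\mathcal E_t-1=\int_0^t\mathcal E_s w_n\,\dd W$ and using $\E\mathcal E_t=1$ together with $\mathcal E_t^2=e^{\int_0^t|w_n|^2}\,\exp\big(-2\int_0^t|w_n|^2+2\int_0^t w_n\,\dd W\big)$, I obtain
\[
   \E|1-\mathcal E_t|\le\big(\E\mathcal E_t^2-1\big)^{1/2}\le\big(e^{D}-1\big)^{1/2},\qquad D:=\int_0^\infty|w_n|^2\,\dds\le C|x-y|_{-1}^2.
\]
Finally, since $x,y\in\sH_c$ give $|x-y|_{-1}\le2\big(|x|_{-1}\vee|y|_{-1}\big)$, the elementary inequality $e^{D}-1\le D\,e^{D}$ converts the right-hand side into $C(|x|_{-1}\vee|y|_{-1})\,|x-y|_{-1}$ with $C$ continuous and nondecreasing, which is exactly \eqref{eq.Ew_n}.

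I expect the main obstacle to be the energy estimate, and in particular producing a \emph{strictly positive} decay rate $\delta$: one must verify that \eqref{eq.N} is precisely what makes the high-mode bi-Laplacian dissipation beat the $\lambda$-term, while choosing the feedback gain $a$ large enough to stabilise the finitely many low modes \emph{without} destroying the pathwise $L^2$-control of $w_n$ (which is what ultimately renders \eqref{eq.Ew_n} linear in $|x-y|_{-1}$). A secondary technical point is the rigorous justification that the implicitly defined feedback $w_n$ is adapted, that $u=v+\rho$ genuinely coincides with $X^n$ driven by the shifted noise, and that Novikov's condition applies so that Girsanov is legitimate.
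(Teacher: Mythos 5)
Your proof is correct and follows essentially the same strategy as the paper's: a Girsanov shift supported on the low modes $\mathrm{span}\{e_1,\dots,e_N\}$ (legitimate by \eqref{assumptionB}), contraction of the difference in $|\cdot|_{-1}$ coming from the monotonicity of $u\mapsto f_n(u)-\lambda u$ plus high-mode dissipation under \eqref{eq.N}, a pathwise (hence Novikov-compatible) exponentially decaying bound on $w_n$, and the same exponential-martingale computation ending in a bound of the form $\left(e^{D}-1\right)^{1/2}\leq D^{1/2}e^{D/2}$ with $D\leq C|x-y|_{-1}^2$. The only deviation is in the choice of control law: you stabilise the low modes by a proportional feedback $\sqrt{B}\,w_n=-aP_N\rho$ with a sufficiently large gain $a$, whereas the paper builds an auxiliary process $\tilde X$ in which the destabilising low-mode term $\lambda A\pi_l\tilde X$ is replaced by $\lambda A\pi_l X^n(t,y,W)$, so that this term cancels exactly in the difference equation and no gain-tuning is needed; both variants produce constants independent of $n$, which is the essential point.
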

\begin{proof}
Fix $n\in\N$. 
Let us denote by $\pi_l$ the projection of $\sH_c$ into $\text{span}\{e_0,\ldots,e_N\}$ (the \emph{low} frequences)
 and by $\pi_h=I-\pi_l$ the projection into its orthogonal complement (the \emph{high} frequences). 
Let $\tilde X$ be the solution of 
\[
  \begin{cases} 
   d \tilde X= \big(-A^2\tilde X + \lambda A \pi_h \tilde X-A p_n(\tilde X) + \lambda A \pi_l X^n(t,y,W)\big)dt + BdW(t)\\
   \tilde X(0) = x.
  \end{cases}
\]
Here $p_n$ is the increasing polynomial $f_n-\lambda$.
Arguing as in the previous section, it is strightforward to show that this equation admits a unique solution in $C([0,T];\sH_c)\cap L^2([0,T];V_1)$, $T>0$, adapted to $W(t), t\geq0$. 
Now  set $w_n(t)=B^{-1}A\pi_l(\tilde X - X^n(t,y,W ))$. 
It is clear that the operator $B^{-1}A\pi_l:\R^N\to\R^N$ is well defined (thanks to \eqref{assumptionB}, \eqref{eq.N}) and that $w_n(t)$ is square integrable and adapted to $W(t),t\geq0$.
Moreover, we find that $X^n\big(t,x,W+\int_0^\cdot w_n(s)ds\big)$  satisfies the same equation of $\tilde X$, so that it has to coincide with $\tilde X$.

Consequently, $Y(t):=X\big(t,x,W+\int_0^\cdot w_n(s)ds\big)-X(t,y,W)$ solves
\[
 \begin{cases}
 dY(t)=\big(-A^2Y(t)+\lambda A\pi_h Y(t)-Ap_n\big(X(t,x,W+\int_0^\cdot w_n(s)ds)\big)+Ap_n(X(t,y,W)\big)dt\\
 Y(0)=x-y. 
 \end{cases} 
\]
By taking the scalar product of both members with $A^{-1}Y(t)$  and using the dissipativity of $p_n$ we find
\begin{multline*}
 \frac12 d|Y(t)|_{-1}^2=-|Y(t)|_1^2+\lambda \|\pi_h Y(t)\|^2-\big(p_n\big(X^n(t,x,W+\int_0^\cdot w_n(s)ds)\big)-p_n(X^n(t,y,W),Y(t)\big)\\
  \leq -\big(\frac12(N+1)^2-\lambda\big)|\pi_hY(t)|_0^2-|\pi_lY(t)|_0^2.
\end{multline*}
Here we have used the estimate
\BES
  | \xi |_1^2=|\pi_l\xi|_1^2+|\pi_h\xi|_1^2=|\pi_l\xi|_1^2+\sum_{k=N+1}^\infty k^2(\xi_k)^2\geq |\pi_l\xi|_0^2+(N+1)^2|\pi_h\xi|_0^2
\EES
where $\xi\in \sH_0$ and $\xi_k$ is the $k-$th Fourier mode.
Setting $\delta=2\pi\min\big\{ \frac12(N+1)^2-\lambda,1\big\}$ (which is stricly positive by \eqref{eq.N}) we deduce, by Gronwall lemma,
\[
 |Y(t)|_{-1}^2+\pi\delta\int_0^t\|Y(s)\|^2ds \leq |x-y|_{-1}^2.
\]
Moreover, since $ V_0\subset V_{-1}$, with $|x|_{-1}\leq \pi^{-1}\|x\|$ we deduce
\[
  |Y(t)|_{-1}\leq e^{-\delta t}|x-y|_{-1}.
\]
This proves \eqref{eq.W+w}. 
Let us show \eqref{eq.Ew_n}.
By the definition of $w_n$ and by \eqref{eq.W+w} we have
\BEA  \label{eq.w_n.0}
  |w_n(s)|\leq \|B^{-1}A\pi_l\|_{\mathcal L(\R^N;\R^N)}|\tilde X - X^n(s,y,W )|_{-1}\leq \|B^{-1}A\pi_l\|_{\mathcal L(\R^N;\R^N)}e^{-\delta s}|x-y|_{-1}.
\EEA
Notice that this estimate holds in $L^\infty(\Omega)$.
Then the process
\[
  e^{-\frac12\int_0^tw_n^2(s)ds+\int_0^tw_n(s)dW(s)}
\]
is a square integrable martingale and by the well known properties of the exponential martingales it holds
\begin{eqnarray} 
 && E\left|1-e^{-\frac12\int_0^tw_n^2(s)ds+\int_0^tw_n(s)dW(s)}\right|
 =  \E\left|\int_0^t e^{-\frac12\int_0^sw_n^2(\tau)d\tau+\int_0^sw_n(\tau)dW(\tau)}w_n(s)dW(s)\right|  \notag
\\
&&\qquad  \leq  \left(\E \int_0^t e^{- \int_0^sw_n^2(\tau)d\tau+2\int_0^sw_n(\tau)dW(\tau)}w_n^2(s)ds  \right)^{\frac12}   \notag
\\
 &&\qquad  \leq  \left( \int_0^t\E\left[e^{- \int_0^sw_n^2(\tau)d\tau+2\int_0^sw_n(\tau)dW(\tau)}\right]|w_n(s)|_{L^\infty(\Omega)}^2ds   \right)^{\frac12}   \notag
\\
&&\qquad  \leq  \left(\int_0^t\E\left[e^{-2\int_0^sw_n^2(\tau)d\tau
     +2\int_0^sw_n(\tau)dW(\tau)}\right]e^{\int_0^s|w_n(\tau)|_{L^\infty(\Omega)}^2d\tau}|w_n(s)|_{L^\infty(\Omega)}^2ds  \right)^{\frac12} \notag
\\
&&\qquad    =\left(\int_0^t e^{\int_0^s|w_n(\tau)|_{L^\infty(\Omega)}^2d\tau}|w_n(s)|_{L^\infty(\Omega)}^2ds  \right)^{\frac12}  \notag
\\ 
&&\qquad    =\left(e^{\int_0^t|w_n(\tau)|_{L^\infty(\Omega)}^2d\tau}-1 \right)^{\frac12}  \notag
\\
&&\qquad    \leq e^{\frac12\int_0^t|w_n(\tau)|_{L^\infty(\Omega)}^2d\tau}   \left(   \int_0^t|w_n(\tau)|_{L^\infty(\Omega)}^2d\tau  \right)^{\frac12}.    \notag  
\end{eqnarray}
By \eqref{eq.w_n.0} we have  
\begin{eqnarray*}
  \int_0^t|w_n(\tau)|_{L^\infty(\Omega)}^2d\tau  &\leq&   \|B^{-1}A\pi_l\|_{\mathcal L(\R^N;\R^N)}^2|x-y|_{-1}^ 2\int_0^t    e^{-2\delta \tau}d\tau
\\
  &\leq&  \frac{\|B^{-1}A\pi_l\|_{\mathcal L(\R^N;\R^N)}^2}{2\delta}|x-y|_{-1}^2.
\end{eqnarray*}
Then \eqref{eq.Ew_n} is satisfied with 
 \[
    C(t)=\frac{\|B^{-1}A\pi_l\|_{\mathcal L(\R^N;\R^N)}}{\sqrt{2\delta}}\exp\left(\frac{\|B^{-1}A\pi_l\|_{\mathcal L(\R^N;\R^N)}^2}{2\delta}t^2\right),\qquad t\geq0. \qedhere
 \]
\end{proof}
The previous lemma means that  we can find a Wiener process $W' $ such that
the solution of equation \eqref{Eq:1.2} starting by $x$ driven by the Wiener process $W'$ approches the solution of \eqref{Eq:1.2} starting by $y$ as $t\to \infty$. 
Moreover, this perturbation is uniformly controlled, in the sense of \eqref{eq.Ew_n}, by a constant independent by $n$.


\begin{Prop} \label{prop.ASF}
Assume that \eqref{assumptionB}, \eqref{eq.N} hold. Then for any $c\in (-1,1)$ the semigroup $P_t^c$ on $\sH_c$  has the asymptotic strong Feller property.
\end{Prop}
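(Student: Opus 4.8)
The plan is to derive the asymptotic strong Feller property in the form of Proposition \ref{prop.def.ASF} by combining the coupling construction of Lemma \ref{lemma.ASF} with the convergence of the approximated semigroups $P_t^{c,n}\to P_t^c$ from Proposition \ref{Prop:3.3}. The whole point of Lemma \ref{lemma.ASF} is that it furnishes, at each level $n$, a shift $w_n$ of the driving noise (via Girsanov) under which the solution started at $x$ is drawn exponentially close in the $V_{-1}$ norm to the solution started at $y$, with the cost of the shift controlled uniformly in $n$. So the strategy is to push this estimate through the Girsanov change of measure to get a gradient-type bound on $P_t^{c,n}\varphi$, and then pass to the limit $n\to\infty$ to obtain the same bound for $P_t^c\varphi$.

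The key steps, in order, are as follows. First I would fix $\varphi$ with $|\varphi|_\infty$ and $|\nabla\varphi|_\infty$ finite and write, using the change of measure induced by shifting $W$ to $W+\int_0^\cdot w_n\,ds$,
\BES
P_t^{c,n}\varphi(x)=\E\Big[\varphi\Big(X^n\big(t,x,W+\textstyle\int_0^\cdot w_n(s)ds\big)\Big)\,e^{-\frac12\int_0^tw_n^2(s)ds+\int_0^tw_n(s)dW(s)}\Big].
\EES
Subtracting $P_t^{c,n}\varphi(y)=\E[\varphi(X^n(t,y,W))]$ and inserting the intermediate term $\E[\varphi(X^n(t,x,W+\int_0^\cdot w_n ds))]$, I would split the difference into two pieces. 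The first piece,
$\E[(\,e^{\cdots}-1)\,\varphi(X^n(t,x,W+\int_0^\cdot w_n ds))]$, is bounded by $|\varphi|_\infty$ times $\E|1-e^{\cdots}|$, which by \eqref{eq.Ew_n} is at most $C(|x|_{-1}\vee|y|_{-1})|x-y|_{-1}$. The second piece,
$\E[\varphi(X^n(t,x,W+\int_0^\cdot w_n ds))-\varphi(X^n(t,y,W))]$, is bounded by $|\nabla\varphi|_\infty$ times the expected $\sH$-distance between the two solutions, which by \eqref{eq.W+w} is at most $e^{-\delta t}|x-y|_{-1}$. Combining the two,
\BES
|P_t^{c,n}\varphi(x)-P_t^{c,n}\varphi(y)|\leq C(|x|_{-1}\vee|y|_{-1})\big(|\varphi|_\infty+e^{-\delta t}|\nabla\varphi|_\infty\big)|x-y|_{-1},
\EES
with the constant $C$ and the rate $\delta$ independent of $n$ (this uniformity is exactly what Lemma \ref{lemma.ASF} was set up to guarantee).

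Finally I would pass to the limit along the subsequence $\{n_k\}$ of Proposition \ref{Prop:3.3}: since $P_t^{n_k,c}\varphi(x)\to P_t^c\varphi(x)$ and likewise at $y$, and the right-hand side of the displayed bound does not depend on $n$, the same inequality holds for $P_t^c$. This is precisely the hypothesis of Proposition \ref{prop.def.ASF} (with $|\cdot|_H=|\cdot|_{-1}$), so $P_t^c$ is asymptotically strong Feller.

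\emph{The main obstacle} I anticipate is justifying the Girsanov/Cameron--Martin change of measure rigorously and controlling the two error terms uniformly in $n$. The Girsanov step requires that the exponential $e^{-\frac12\int_0^tw_n^2\,ds+\int_0^tw_n\,dW}$ be a genuine martingale with the claimed mean-one property; this is fine here because \eqref{eq.w_n.0} shows $w_n$ is bounded in $L^\infty(\Omega)$ uniformly in $n$, so Novikov's condition holds trivially, but it must be invoked carefully. The more delicate point is the limit transition: since we only have convergence of $P_t^{n_k,c}\varphi(x)$ pointwise for fixed bounded Lipschitz $\varphi$ (not uniform control of the processes themselves), the argument must keep all $n$-dependence confined to the two semigroup evaluations and never to the constants $C,\delta$ — which is why the uniform-in-$n$ character of \eqref{eq.W+w} and \eqref{eq.Ew_n} is indispensable.
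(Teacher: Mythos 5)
Your proposal is correct and follows essentially the same route as the paper: reduce to the criterion of Proposition \ref{prop.def.ASF}, transfer the estimate from the approximating semigroups using $P_t^{c,n}\to P_t^c$ (Proposition \ref{Prop:3.3}), and bound the two pieces of the splitting by the Girsanov estimate \eqref{eq.Ew_n} and the coupling estimate \eqref{eq.W+w} of Lemma \ref{lemma.ASF}, whose uniformity in $n$ is exactly what allows the passage to the limit. In fact your splitting, which inserts $\E\big[\varphi\big(X^n\big(t,x,W+\int_0^\cdot w_n(s)\,ds\big)\big)\big]$ as the intermediate term in both differences, is the corrected form of the decomposition displayed in the paper (which mixes $x$ and $y$ in the intermediate terms by an apparent typo), so nothing further is needed.
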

\Proof
Fix $c\in (-1,1)$. 
Taking into account Proposition \ref{prop.def.ASF}, it is sufficient to show that 
there exist a nondecreasing function $C:\R^+\to\R^+$ and a
a constant $\delta>0$ such that for any $\psi:\sH_c\to\R$ continuous and bounded with bounded Fr\'echet derivative it holds 
\BE \label{eq.ASF}
    |P_t^{c}\psi(x)-P_t^{c}\psi(y)|\leq C(|x|_{-1}\vee |y|_{-1})(|\psi|_\infty +e^{-\delta t}|\nabla \psi|_\infty)|x-y|_{-1}.
\EE
Taking into account that $P_t^{c,n}\stackrel{n\to\infty}{\longrightarrow} P_t^c$ (see Proposition \ref{Prop:3.3}), for any $\varepsilon>0$ there exists $n\in \N$ (depending on $\varepsilon$, $\psi$, $x$ and $y$) such that
\[
   |P_t^{c}\psi(x)-P_t^{c}\psi(y)|\leq \varepsilon + |P_t^{c,n}\psi(x)-P_t^{c,n}\psi(y)|.
\]
By the previous Lemma, we take $w_n(\cdot)$ such that \eqref{eq.W+w}, \eqref{eq.Ew_n} hold.
Then,
\begin{multline*}
  P_t^{c,n}\psi(x)- P_t^{c,n}\psi(y)=\E\psi(X^n(t,x,W))-\E\psi(X^n(t,y,W))=\E\psi(X^n(t,x,W))\\
      -\E\psi(X^n(t,x,W+\int_0^\cdot w_n(s)ds))+\E\psi(X^n(t,y,W+\int_0^\cdot w_n(s)ds))-\E\psi(X^n(t,y,W)).
\end{multline*}
We stress that the expectation is taken with respect to the Wiener process $W(t),t\geq0$. 
Then, by Girsanov theorem, we obtain
\begin{multline*}
  \E\psi(X^n(t,x,W))-\E\psi(X^n(t,x,W+\int_0^\cdot w_n(s)ds))\\
    =\E[\psi(X^n(t,x,W))(1-e^{-\frac12\int_0^tw_n^2(s)ds+\int_0^tw_n(s)dW(s)})].
\end{multline*}
Then by \eqref{eq.Ew_n} we deduce
\begin{multline*}
 |\E\psi(X^n(t,x,W))-\E\psi(X^n(t,x,W+\int_0^\cdot w_n(s)ds))|\\
\leq |\psi|_{L^\infty}\E|1-e^{-\frac12\int_0^tw_n^2(s)ds+\int_0^tw_n(s)dW(s)}|\leq  C(|x|_{-1}\vee |y|_{-1})|\psi|_{L^\infty}|x-y|_{-1}.
\end{multline*}
On the other side, by \eqref{eq.W+w} we obtain
\begin{multline*}
   |\E\psi(X^n(t,y,W+\int_0^\cdot w_n(s)ds))-\E\psi(X^n(t,y,W))|
\\
  \leq |\nabla\psi|\E|X^n(t,y,W+\int_0^\cdot w_n(s)ds)-X^n(t,y,W)|\leq |\nabla\psi|e^{-\delta t}|x-y|_{-1}.
\end{multline*}
Therefore it holds
\[
    |P_t^{c,n}\psi(x)- P_t^{c,n}\psi(y)|\leq C(|x|_{-1}\vee |y|_{-1})|\psi|_{L^\infty}|x-y|_{-1}+|\nabla\psi|e^{-\delta t}|x-y|_{-1}
\]
where the continuous  nondecreasing function $C:\R^+\to\R^+$ and the positive constant   $\delta$ are independent by $n$ and by $\varepsilon$. 
Consequently, \eqref{eq.ASF} follows.
\EProof
\begin{Prop} \label{prop.weak1}
 For any $c\in (-1,1)$, the semigroup $P_t^c,t\geq0$ on $\sH_c$ is weakly topologically irreducible.
\end{Prop}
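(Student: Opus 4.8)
The plan is to prove weak topological irreducibility with a common target that is independent of the two initial data. First I would fix $y=c e_0$, the constant state in $\sH_c$, and reduce the whole statement to the claim that $c e_0$ is \emph{accessible} from every point: for each $x\in\sH_c$ and each open set $A\ni c e_0$ there is a time $t>0$ with $P_t^c(x,A)>0$. Granting this, the definition is satisfied by taking the same $y=c e_0$ for any pair $x_1,x_2\in\sH_c$, with suitable $t_1,t_2$. Note that $c e_0$ is the natural target: it is the unique spatially homogeneous element of $\sH_c$, it is an equilibrium of the drift in \eqref{Eq:1.2} (since $Ae_0=0$), and it is the mean of the Gaussian measure $\mu_c$.

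I would first establish accessibility for the approximating dynamics \eqref{Eq:1.2}, where the drift is smooth and the noise $\sqrt B\,\dd W$ forces only the finitely many modes $e_1,\dots,e_N$ (see \eqref{assumptionB}). The key is a controllability statement: steering $x$ into any neighborhood of $c e_0$ by a control acting on the low modes. Using that $\sqrt B$ is invertible on $\mathrm{span}\{e_1,\dots,e_N\}$ (by \eqref{assumptionB}, \eqref{eq.N}), I would choose a control $u$ so that $\pi_l X(t)$ follows a prescribed smooth path from $\pi_l x$ to $c e_0$ and then stays at $c e_0$; the required $u$ is read off algebraically from the low-mode component of the equation. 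Once the low modes are clamped to the equilibrium, the high-mode remainder $Y(t)=\pi_h\big(X(t)-c e_0\big)$ obeys a dissipative equation, and the estimate already carried out in Lemma~\ref{lemma.ASF} applies verbatim — pairing with $A^{-1}Y$, using the monotonicity of $p_n=f_n-\lambda$ and the spectral gap \eqref{eq.N} — giving $|Y(t)|_{-1}\le e^{-\delta(t-t_0)}|Y(t_0)|_{-1}$ with the same $n$-independent rate $\delta$. Hence for $T$ large the controlled endpoint lies in any prescribed neighborhood of $c e_0$.

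To convert controllability into positivity for the stochastic approximation, I would mirror the finite-dimensional Girsanov computation of Lemma~\ref{lemma.ASF}. Denoting by $v_n$ the control just constructed and by $\bar X^n$ the corresponding noiseless trajectory, on the event that the (finitely many) noise components stay in a small sup-norm tube around $\int_0^\cdot v_n\,\dds$ the solution $X^n(T,x,W)$ stays close to $\bar X^n(T)$, by continuous dependence on the driving path. The probability of this tube is positive and, by Cameron--Martin, bounded below by $c_0\exp\big(-\tfrac12\|v_n\|_{\sL^2(0,T)}^2\big)$ with $c_0>0$ coming from a genuinely finite-dimensional small-ball estimate. This yields $P_T^{c,n}(x,U)>0$ for every neighborhood $U$ of $c e_0$. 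Finally I would pass to the limit along the subsequence of Proposition~\ref{Prop:3.3}: since $P_T^{n_k,c}(x,\cdot)\Rightarrow P_T^c(x,\cdot)$, the portmanteau inequality gives $P_T^c(x,\overline U)\ge\limsup_k P_T^{n_k,c}(x,\overline U)$ for a closed ball $\overline U\subset A$, so $P_T^c(x,A)>0$ follows as soon as the approximations charge $\overline U$ with mass bounded below uniformly in $k$.

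The hard part is precisely this uniform-in-$n$ lower bound, for which the qualitative positivity above is not enough. It reduces to two $n$-independent inputs: a uniform bound on $\|v_n\|_{\sL^2(0,T)}$, and a modulus of continuous dependence of $X^n$ on the noise path that is uniform in $n$. Both are plausible from the structure already exploited in the paper — on the clamped low modes $X$ is bounded independently of $n$, so the nonlinear terms that $v_n$ must compensate are uniformly bounded; the high modes relax at the $n$-independent rate $\delta$ of Lemma~\ref{lemma.ASF}; and the a priori estimates of Section~\ref{S:2} are themselves uniform in $n$. Making the tube estimate quantitative and uniform in $n$, so that the Cameron--Martin lower bound survives the passage $n_k\to\infty$, is the technical crux of the proof; the reduction to accessibility of $c e_0$ and the weak-convergence step are comparatively soft.
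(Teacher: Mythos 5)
Your reduction is the same as the paper's: both proofs take the constant state $ce_0$ as the common accessible point $y$, and both recognize that, since Proposition \ref{Prop:3.3} only gives pointwise convergence $P_t^{c,n}\to P_t^c$, one must produce a lower bound on $P_t^{c,n}(x,U)$ that is \emph{uniform in $n$} before passing to the limit. But at that point your argument has a genuine gap, and it sits exactly where you flag it. The two inputs you call ``plausible'' are in fact the ones that fail. First, the control $v_n$ must compensate the low-mode projection of the nonlinearity, $\pi_l A f_n(X^n)$, along the trajectory; clamping $\pi_l X^n$ does not bound $X^n$ pointwise (the high modes are only controlled in $V_{-1}$, not in $\sL^\infty$), the approximations $X^n$ are not confined to $[-1,1]$, and even on $[-1,1]$ one has $\sup_{|u|\leq 1}|f_n(u)|\sim \ln n\to\infty$, so no $n$-independent bound on $\|v_n\|_{\sL^2(0,T)}$ is available. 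Second, the ``uniform modulus of continuous dependence on the driving path'' is not credible: $f_n$ has degree $2n+1$ and its Lipschitz constant on any neighbourhood of $[-1,1]$ grows like $n$ (indeed $f_n'(\pm 1)\approx -2n$), so Gronwall-type tube estimates degrade like $e^{Cnt}$. The dissipativity of $p_n=f_n-\lambda\,\cdot$, which you invoke from Lemma \ref{lemma.ASF}, only has a sign when $f_n(X_1)-f_n(X_2)$ is paired against $X_1-X_2$ for two trajectories driven by the \emph{same} path; once the paths differ and one subtracts stochastic convolutions, that structure is destroyed and the $n$-dependent Lipschitz constants re-enter. So the ``technical crux'' you defer is not a routine quantification: as set up, it is most likely false, and the proposal does not constitute a proof.

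The paper circumvents precisely this obstruction by a different mechanism, which never requires bounding $f_n(X^n)$ or any continuity of the solution map in the noise. It sets $W_A$ to be the stochastic convolution (solution of the linear equation) and $Y^n=X^n-W_A$, and uses the monotonicity inequality $(f_n(a+b)-f_n(a))b\leq 0$ to derive the pathwise bound $|Y^n(t)|_{-1}^2\leq \int_0^t e^{2C(t-s)}\|f_n(W_A(s))\|_2^2\,\dds$, in which all dependence on $n$ is confined to $f_n$ evaluated at the \emph{Gaussian} process $W_A$, not at the solution. The pointwise domination $|f_n(u)|\leq |f(u)|$ on $(-1,1)$ (the partial sums of the logarithm series are dominated by the full series) then replaces $f_n$ by $f$, so the exceptional event $\{\int_0^t e^{2C(t-s)}\|f(W_A(s))\|_2^2\,\dds+|W_A(t)|_{-1}^2>\delta^2/2\}$ does not depend on $n$ at all, and Gaussian support properties of $W_A$ bound its probability away from $1$. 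If you want to salvage your control-theoretic route, you would need to replace your tube argument by some comparison of this type; as written, the Cameron--Martin constant $c_0$ and the tube radius both collapse as $n\to\infty$.
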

\begin{proof}
We shall show that that for any $x\in \sH_c$, $\delta>0$ there exists $\varepsilon>0$ and $t>0$ such that $P^c_t(x,\overline{B(0,\delta)})\leq 1-\varepsilon$,
where $\overline{B(0,\delta)}=\{y\in\sH_c:|y-c|_{-1}>\delta\}$.
Since $P_t^{n,c}\to P_t^c$, it is sufficient to show that for such $t>0$, $P^{c,n}_t(x,\overline{B(0,\delta)})^c)\leq 1-\varepsilon$ with $\varepsilon>0$ independent by $n$.
Let $W_A(t,x)$ be the solution of 
\[
   \begin{cases}
     DZ=A^2Zdt+BdW(t),\\
    Z(0)=x.
   \end{cases}
\]
Setting $Y^n=X^n-W_A(t)$, we have that  $Y^n\in \sH_0$ and it sastisfies the equation
\[
 \begin{cases}
    dY^n(t)=A(-A Y^n+\lambda Y^n+f_n(Y^n+W_A))dt,\\
    Y^n(0)=0.
 \end{cases}
\]
By multiplying both sides by $(-A)^{-1}Y^n$ and integrating on $[-1,1]$ we find
\begin{eqnarray*}
 \frac{1}{2}d|Y^n|_{-1}^2&=&-\|\nabla Y^n\|_2^2+\lambda \|Y^n\|_2^2 +\langle f_n(Y^n+W_A),Y^n\rangle
\\
  &\leq& -\|\nabla Y^n\|_2^2+\lambda \|Y^n\|_2^2 +\langle f_n(W_A),Y^n\rangle
\\
  &\leq& -\|\nabla Y^n\|_2^2+\lambda \|Y^n\|_2^2 +\|f_n(W_A)\|_2^2+4\|Y^n\|_2^2
\end{eqnarray*}
where we have used Young's inequality and the fact $(f_n(a+b)-f_n(a))b\leq 0$ for any $a,b\in\R$ and $n\geq0$.
It is clear that there exists $C>0$ sucht that
\[
  -\|\nabla z\|_2^2+(\lambda +4)\|z\|_2^2\leq C|z|_{-1}^2 
\]
for any $z\in H^1$. Then we deduce
\[
  \frac{1}{2}d|Y^n|_{-1}^2\leq C|Y^n|_{-1}^2+\|f_n(W_A)-f_n(c)\|_2^2.
\]
So by Grownwall inequality we find
\[
  |Y^n(t)|_{-1}^2\leq \int_0^te^{2C(t-s)}\|f_n(W_A(s))\|_2^2ds.
\]
Consequently,
\begin{eqnarray}
 \P(|X^n(t)|_{-1}>\delta)   &\leq&    \P(|Y^n(t)|_{-1}+|W_A(t)|_{-1}>\delta)\leq \P(|Y^n(t)|_{-1}^2+|W_A(t)|_{-1}^2>\delta^2/2) \notag
\\
      &\leq&    \P\left(\int_0^te^{2C(t-s)}\|f_n(W_A(s))\|_2^2ds+|W_A(t)|_{-1}^2>\delta^2/2\right).\label{eq.fn}
\end{eqnarray}
By the properties of the approximating functions $f_n$ we have
$
     |f_n(W_A(s))|\leq |f(W_A(s))|
$
where the right-hand side is allowed to be $+\infty$ if $|W_A(t,x)|\geq 1$.
This implies that \eqref{eq.fn} is bounded by
\begin{equation*} 
  \P\left(\int_0^te^{2C(t-s)}\|f (W_A(s))\|_2^2ds+|W_A(t)|_{-1}^2>\delta^2/2\right).
\end{equation*}
In order to conclude the proof, it is sufficient to observe that by the gaussianity of the stochastic convolution this probability is $<1$. 
%
\end{proof}

\section*{Conclusion}

We obtain the ergodicity of the limit semi-group by assuming an essentially elliptic structure of the noise (that is the noise acts on the first modes).
We expect that these properties (expecially the ASF property) hold also with more general noises  but this need a better understanding of how the nonlinearity spreads the noise. 


Unfortunately the structure of the limit equation is unknown. In previous papers, for the same equation perturbed by white noise, it is proved that the nonlinearity has lead to reflection measures.
On the other side, in the deterministic case, there is a $L^\infty$ bound proving that the solution never approach the singularities.
In both cases they prove regularity in space that we have been unable to obtain in our degenerate noise case.
With enough regularity, we expect a good characterization of the solution (with or without reflection measure) leading to uniqueness.


\end{document}